\newcommand{\EulerB}[2]{\left\langle {#1 \atop #2} \right\rangle^{\!B}}
\newcommand{\HeadTitle}{The Mellin transforms of $1 / \operatorname{arctanh} x$ and $1 / \sqrt{1-x^2}\,\operatorname{arctanh} x$}
\newcommand{\HeadAuthor}{Luc Ramsès TALLA WAFFO}
\titleformat{\section} {\normalfont\normalsize\bfseries\itshape\centering} {\thesection} {0em} {}
\titleformat{\section}[block]
  {\normalfont\large\bfseries\itshape\centering}
  {§\thesection.}
  {1em}
  {}
	\newtheorem{theorem}{Theorem}[section]
\newtheorem{lemma}[theorem]{Lemma}
\newtheorem{proposition}[theorem]{Proposition}
\crefname{lemma}{lemma}{lemmas}
\Crefname{lemma}{Lemma}{Lemmas}
\crefname{proposition}{proposition}{propositions}
\Crefname{proposition}{Proposition}{Propositions}
\crefname{theorem}{theorem}{theorems}
\Crefname{theorem}{Theorem}{Theorems}
\begin{document}

\thispagestyle{fancy} 

\vspace{0.2cm}

\begin{center}
\Large{The Mellin transforms of $\dfrac{1}{\operatorname{arctanh} x}$ and $\dfrac{1}{\sqrt{1-x^2}\,\operatorname{arctanh} x}$} \end{center} 

\hspace{3cm}
\begin{center}
Luc Ramsès TALLA WAFFO \\
Technische Universität Darmstadt\\
Karolinenplatz 5, 64289 Darmstadt, Germany\\
ramses.talla@stud.tu-darmstadt.de\\
\vspace{0.5cm}
\today
\end{center}

\begin{abstract}
We investigate the Mellin transforms of
\(\dfrac{1}{\operatorname{arctanh} x}\) and
\(\dfrac{1}{\sqrt{1-x^2}\,\operatorname{arctanh} x}\) viewed as compactly supported functions on \(\left(0,1\right)\). These transforms are closely connected
with conjectures on the arithmetic nature of the ratios
\(\dfrac{\zeta(2n+1)}{\pi^{2n+1}}\) and \(\dfrac{\beta(2n)}{\pi^{2n}}\) . While their values at
odd integers were previously studied, the evaluation at even integers
leads to classes of improper integrals that cannot be handled by parity
arguments. Using contour integration techniques, we derive explicit
closed-form expressions involving derivatives of the Riemann zeta and
Dirichlet beta functions, thereby extending earlier results and
providing new analytic tools for the study of related hyperbolic
integrals.
\end{abstract}

\vspace{0.2cm}

{\small\underline{\textbf{Notations:}} Throughout this manuscript, $c_{k,n}$ denotes the coefficient of $x^k$ in the Taylor expansion of $\dfrac{x^{2n+1}}{\sinh^{2n+1}x}$ around 0; $d_{k,n}$ those of $x^k$ in the Taylor expansion of $\dfrac{x^{2n}}{\sinh^{2n}x}$. $\displaystyle\genfrac{\langle}{\rangle}{0pt}{}{m}{k}$ stands for Eulerian of type A; $B_{2n}$ denotes Bernoulli numbers, $E_{2n}$ denotes Euler numbers, $\displaystyle\EulerB{n}{k}$ are Eulerian numbers of type B.}

\vspace{0.2cm}

{\small
\tableofcontents }

\vspace{0.5cm}

\section*{Introduction}
\addcontentsline{toc}{section}{Introduction}

\vspace{0.3cm}

The present work is closely related to recent investigations initiated in
\cite{talla_waffo_integral_2025}, devoted to the study of certain
\emph{perspectives on the arithmetic nature of the ratios}$
\dfrac{\zeta(2n+1)}{\pi^{2n+1}}
\quad \text{and} \quad
\dfrac{\beta(2n)}{\pi^{2n}}$.
In particular, it has been established that there exist polynomials
\(\Xi_n(x), \Lambda_n(x) \in \mathbb{Q}[x]\) such that, for every nonzero
natural integer \(n\), the following integral representations hold
\cite{TallaWaffo2025arxiv2511.02843}:
\[
\frac{\beta(2n)}{\pi^{2n-1}}
=
\int_{0}^{1}
\frac{x\,\Xi_n(x)}{\sqrt{1-x^2}\,\operatorname{arctanh} x}\,dx,
\qquad
\frac{\zeta(2n+1)}{\pi^{2n}}
=
\int_{0}^{1}
\frac{x\,\Lambda_n(x)}{\operatorname{arctanh} x}\,dx.
\]
These representations naturally motivate the following conjectural
statements.
\begin{itemize}
\item
The irrationality of the numbers
\(\dfrac{\zeta(2n+1)}{\pi^{2n+1}}\) for all \(n\) would follow from the
conjecture
\[
\forall\, P \in \mathbb{Z}[x]\setminus\{0\}, \qquad
\int_{0}^{1}
\frac{x\,P(x)}{\operatorname{arctanh} x}\,dx \notin \pi\mathbb{Q}.
\]
\item
Similarly, the irrationality of the numbers
\(\dfrac{\beta(2n)}{\pi^{2n}}\) for all \(n\) would follow from
\[
\forall\, P \in \mathbb{Z}[x]\setminus\{0\}, \qquad
\int_{0}^{1}
\frac{x\,P(x)}{\sqrt{1-x^2}\,\operatorname{arctanh} x}\,dx
\notin \pi\mathbb{Q}.
\]
\end{itemize}

These conjectures were originally approached by studying the structure of
the integrals
\[
\int_{0}^{1} \frac{x^{2n-1}}{\operatorname{arctanh} x}\,dx
\quad \text{and} \quad
\int_{0}^{1}
\frac{x^{2n-1}}{\sqrt{1-x^2}\,\operatorname{arctanh} x}\,dx.
\]
However, a point that has received comparatively little attention is that
the polynomials \(\Xi_n\) and \(\Lambda_n\) are \emph{even polynomials},
that is, they are of the form
\[
a_n x^{2n} + a_{n-1} x^{2n-2} + \cdots + a_1 x^2 + a_0.
\]
By contrast, the conjectures above are formulated for arbitrary integer
polynomials and do not explicitly take this parity constraint into account.

This observation naturally raises the question of how odd-degree
contributions enter the picture. More precisely, one is led to consider the
integrals
\[
\int_{0}^{1} \frac{x^{2n}}{\operatorname{arctanh} x}\,dx
\quad \text{and} \quad
\int_{0}^{1}
\frac{x^{2n}}{\sqrt{1-x^2}\,\operatorname{arctanh} x}\,dx,
\]
which, under suitable conditions, are convergent and also play a role in
the analysis of the above conjectures.

This consideration motivates the introduction of the more general families
of integrals
\[
\int_{0}^{1} \frac{x^{s-1}}{\operatorname{arctanh} x}\,dx
\quad \text{and} \quad
\int_{0}^{1}
\frac{x^{s-1}}{\sqrt{1-x^2}\,\operatorname{arctanh} x}\,dx,
\]
where \(s \in \mathbb{C}\) satisfies \(\Re(s) > 1\).
No convergence issues arise in this range, since the integrands admit
continuous extensions at the endpoints. We denote the resulting functions
of the complex variable \(s\) by \(\Phi_1(s)\) and \(\Phi_2(s)\),
respectively.

Remarkably, these functions admit a natural interpretation as
\emph{Mellin transforms}. To make this precise, we introduce the indicator
function of the interval \((0,1)\),
\[
\mathbf{1}_{(0,1)}(x)
:= H(x)-H(x-1)
=
\begin{cases}
1, & 0 < x < 1,\\
0, & \text{otherwise},
\end{cases}
\]
where \(H\) denotes the Heaviside function. With this notation, the
functions \(\Phi_1\) and \(\Phi_2\) can be written formally as
\[
\Phi_1(s)
:= \int_{0}^{+\infty}
\frac{x^{s-1}}{\operatorname{arctanh} x}\,
\mathbf{1}_{(0,1)}(x)\,dx,
\qquad
\Phi_2(s)
:= \int_{0}^{+\infty}
\frac{x^{s-1}}{\sqrt{1-x^2}\,\operatorname{arctanh} x}\,
\mathbf{1}_{(0,1)}(x)\,dx,
\]
which identifies them explicitly as Mellin transforms of compactly
supported functions. This interpretation as Mellin transform, together with the very common and classical convergence condition \(\Re(s) > 1\) and their connection to $\dfrac{\zeta(2n+1)}{\pi^{2n+1}}
\quad \text{and} \quad
\dfrac{\beta(2n)}{\pi^{2n}}$ are the three key features which motivated us to this study. The values of these two functions at even integers have already been
investigated in \cite{talla_waffo_integral_2025}, where several explicit
instances are also provided.

\vspace{0.3cm}

Second, it is shown that the values of these functions at odd integers
belong to a class of integrals that was deliberately left aside in the
original work. Consequently, their derivation cannot be carried out
using exactly the same methods. Indeed, the evaluation at even integers
leads to integrals of the form
\[
\int_{0}^{\infty} f(x)\,dx,
\]
where \(f\) is an odd function. Such integrals cannot be treated by
exploiting parity arguments, as is customary when \(f\) is even, since
this would trivially yield zero. A different approach is therefore
required, and a new method is developed in the present work to handle
this situation.

The aim and the contribution of this article are consequently threefold:
\begin{itemize}
\item to study two classes of Mellin transforms that are deeply connected
with the arithmetic nature of the ratios $
\dfrac{\zeta(2n+1)}{\pi^{2n+1}}
\quad \text{and} \quad
\dfrac{\beta(2n)}{\pi^{2n}}$,
see \cref{sec:values_at_integers} and \cref{sec:transforms};
\item to provide new perspectives on the study of certain integrals that
may have been overlooked in the remarkable works of Blagouchine,
Adamchik, and others; see \cref{sec:first_integral} and
\cref{sec:second_integral};
\item to develop techniques based on complex analysis that may prove
useful in establishing nontrivial identities arising in combinatorics. See \cref{sec:vanishing_functions}, \cref{prop:coefficient_of_eta_minus_one}, \cref{lemma:from_sum_to_integral}, \cref{prop:vanishing}, \cref{lemma:euler_and_bernoulli}, \cref{prop:coefficient_of_zeta_prime2} and \cref{prop:another_non_trivial_identity}.
\end{itemize}

\vspace{0.5cm}

\section{\hspace{0.3cm}The values at odd integers and some related results}\label{sec:values_at_integers}

\vspace{0.3cm}

\begin{lemma}\label[lemma]{lemma:polynomials}
Consider the polynomials\[P_n(x) := \prod_{i=1}^{2n+1}(x-n-1+i) \qquad \text{and} \qquad Q_n(x):=\prod_{i=1}^{2n}(x-n+i)\]

\begin{enumerate}
	\item The roots of $P_n$ are $0, \pm 1, \pm 2, \pm 3, ..., \pm n$ and those of $Q_n$ are $0, -n, \pm 1, \pm 2, \pm 3, ..., \pm (n-1)$
	\item $P_n(x) = \displaystyle\sum_{k = 0}^n g_{k,n} x^{2k+1}$ and $Q_n(x) = \displaystyle\dfrac{1}{4^n}\sum_{k = 0}^n h_{k,n} (2x+1)^{2k}$ where $g_{k,n}$ and $h_{k,n}$ are some integers
\end{enumerate}
\end{lemma}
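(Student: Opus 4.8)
The plan is to treat the two claims in part (1) and the two claims in part (2) separately, since they concern different products.

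\medskip

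\textbf{Part (1).} For $P_n(x) = \prod_{i=1}^{2n+1}(x-n-1+i)$, I would simply let $i$ run over $1,\dots,2n+1$ and observe that the factor $x-n-1+i$ vanishes precisely when $x = n+1-i$. As $i$ ranges over $\{1,2,\dots,2n+1\}$, the quantity $n+1-i$ ranges over $\{n, n-1, \dots, 1, 0, -1, \dots, -n\}$, i.e.\ exactly $\{0,\pm 1,\pm 2,\dots,\pm n\}$. Similarly, for $Q_n(x)=\prod_{i=1}^{2n}(x-n+i)$, the factor $x-n+i$ vanishes at $x=n-i$, and as $i$ runs over $\{1,\dots,2n\}$ the value $n-i$ runs over $\{n-1, n-2, \dots, 1, 0, -1, \dots, -n\}$, which is $\{0,-n,\pm 1,\dots,\pm(n-1)\}$. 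This is entirely routine; there is no obstacle here.

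\medskip

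\textbf{Part (2), the claim for $P_n$.} The key point is that the root set $\{0,\pm 1,\dots,\pm n\}$ is symmetric about $0$, so $P_n$ is an \emph{odd} function: pairing the factor for root $k$ with the factor for root $-k$ gives $(x-k)(x+k) = x^2-k^2$, and together with the lone factor $x$ for the root $0$ we obtain
\[
P_n(x) = x\prod_{k=1}^{n}(x^2-k^2),
\]
which is manifestly a polynomial in $x^2$ times $x$, hence of the form $\sum_{k=0}^{n} g_{k,n}x^{2k+1}$. Integrality of the $g_{k,n}$ follows because $P_n$ is a product of monic linear factors with integer constant terms, so it lies in $\mathbb{Z}[x]$; alternatively expand $\prod_{k=1}^n(x^2-k^2)$ directly, getting $g_{k,n}$ as $(-1)^{n-k}e_{n-k}(1^2,2^2,\dots,n^2)$, an elementary symmetric function of integers.

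\medskip

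\textbf{Part (2), the claim for $Q_n$ — the main obstacle.} Here the root set $\{0,-n,\pm 1,\dots,\pm(n-1)\}$ is \emph{not} symmetric about $0$, but it \emph{is} symmetric about $-\tfrac12$: indeed, reflecting $r\mapsto -1-r$ sends $0\mapsto -1$, $-n\mapsto n-1$, and $k\mapsto -1-k$, so the set $\{0,-n,\pm1,\dots,\pm(n-1)\}$ maps to itself. The natural move is therefore the substitution $x = \tfrac{u-1}{2}$, i.e.\ $u = 2x+1$; then a root $r$ of $Q_n$ in $x$ corresponds to $u = 2r+1$, and the reflection symmetry about $-\tfrac12$ in $x$ becomes the symmetry $u\mapsto -u$ in $u$. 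Concretely, the roots in the $u$-variable are $2\cdot 0+1=1$, $2(-n)+1 = -(2n-1)$, and $2(\pm k)+1 = \pm 2k+1$ for $k=1,\dots,n-1$; these are exactly $\pm 1,\pm 3,\dots,\pm(2n-1)$, symmetric about $0$. Hence
\[
Q_n\!\left(\tfrac{u-1}{2}\right) = \prod_{j=0}^{2n-1}\!\left(\tfrac{u-1}{2} - (n-1-j)\right) = \frac{1}{2^{2n}}\prod_{j=0}^{2n-1}\bigl(u-(2n-1-2j)\bigr) = \frac{1}{4^{n}}\prod_{k=1}^{n}\bigl(u^2-(2k-1)^2\bigr),
\]
where in the last step I pair the root $2k-1$ with $-(2k-1)$. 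Substituting back $u=2x+1$ gives $Q_n(x) = \tfrac{1}{4^n}\prod_{k=1}^{n}\bigl((2x+1)^2-(2k-1)^2\bigr)$, which upon expansion is $\tfrac{1}{4^n}\sum_{k=0}^{n} h_{k,n}(2x+1)^{2k}$ with $h_{k,n} = (-1)^{n-k}e_{n-k}\bigl(1^2,3^2,\dots,(2n-1)^2\bigr)\in\mathbb{Z}$. The one thing to check carefully is the bookkeeping of the factor of $2$ from each linear factor when passing from $x$ to $u$ — there are $2n$ factors, each contributing $\tfrac12$, giving the overall $2^{-2n}=4^{-n}$ — and that the indexing of the $2n$ shifted roots $2n-1-2j$ for $j=0,\dots,2n-1$ does indeed sweep out $\{\pm1,\pm3,\dots,\pm(2n-1)\}$ without repetition. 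This is the only place where a sign or index slip could occur, so it is where I would be most careful.
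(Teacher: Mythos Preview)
Your proof is correct and follows essentially the same route as the paper: both arguments reduce to the factorizations $P_n(x)=x\prod_{k=1}^n(x^2-k^2)$ and $Q_n(x)=\tfrac{1}{4^n}\prod_{k=1}^n\bigl((2x+1)^2-(2k-1)^2\bigr)$, from which the claimed expansions with integer coefficients follow immediately. Your treatment of $Q_n$ via the symmetry about $-\tfrac12$ and the substitution $u=2x+1$ is a slightly cleaner presentation of what the paper does by directly doubling each linear factor; the paper additionally records recurrence relations for $g_{k,n}$ and $h_{k,n}$, but these are not needed for the lemma itself.
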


\begin{proof}

The products can be expanded in the following manner
\[\begin{cases}
P_n(x) = \displaystyle\prod_{i=1}^{2n+1}(x-n-1+i) = \displaystyle\prod_{i=1}^{n}(x-n-1+i) \times x \times\displaystyle\prod_{i=n+2}^{2n+1}(x-n-1+i)\\
\vspace{0.1cm}\\
Q_n(x)=\displaystyle\prod_{i=1}^{2n}(x-n+i) = \displaystyle\prod_{i=1}^{n-1}(x-n+i) \times x \times \displaystyle\prod_{i=n+1}^{2n-1}(x-n+i) \times (x+n)
\end{cases}\]

After reindexing, we arrive at

\[\begin{cases}
P_n(x) = x\displaystyle\prod_{i=1}^{n}(x-i) \,\,\displaystyle\prod_{i=1}^{n}(x+i)\\
\vspace{0.1cm}\\
Q_n(x)= x(x+n)\displaystyle\prod_{i=1}^{n-1}(x-i) \,\, \displaystyle\prod_{i=1}^{n-1}(x+i)
\end{cases}\]

\vspace{0.5cm}

These expressions are sufficient to determine the sets of roots of each polynomial. In particular, the linear factor \(x\) may be identified with the index \(i = 0\) in the product \(\prod_{i=1}^{n-1}(x - i)\), while the factor \((x + n)\) corresponds to the index \(i = n\) in the product \(\prod_{i=1}^{n-1}(x + i)\). Consequently, these factors can be interpreted as extending the respective index sets of the products to include the boundary values \(i = 0\) and \(i = n\). Consequently,

\[\begin{cases}
P_n(x) = x\displaystyle\prod_{i=1}^{n}(x-i) \,\,\displaystyle\prod_{i=1}^{n}(x+i)\\
\vspace{0.1cm}\\
Q_n(x)= \displaystyle\prod_{i=0}^{n-1}(x-i) \,\, \displaystyle\prod_{i=1}^{n}(x+i) = \displaystyle\prod_{i=1}^{n}(x-i+1) \,\, \displaystyle\prod_{i=1}^{n}(x+i)
\end{cases}\]

\vspace{0.5cm}

The factor of the polynomial $Q_n$ can be multiplied by two and it results in a factor $\dfrac{1}{4^n}$ as follows

\[\begin{cases}
P_n(x) = x\displaystyle\prod_{i=1}^{n}(x-i) \,\,\displaystyle\prod_{i=1}^{n}(x+i)\\
\vspace{0.1cm}\\
Q_n(x)= \dfrac{1}{4^n}\displaystyle\prod_{i=1}^{n}(2x-2i+2) \,\, \displaystyle\prod_{i=1}^{n}(2x+2i)
\end{cases}\]

\vspace{0.5cm}

A well-known remarkable identity yields

\[\begin{cases}
P_n(x) = x\displaystyle\prod_{i=1}^{n}(x^2-i^2) \\
\vspace{0.1cm}\\
Q_n(x)= \dfrac{1}{4^n}\displaystyle\prod_{i=1}^{n}((2x+1)^2-(2i-1)^2)
\end{cases}\]

\vspace{0.5cm}

The polynomial \(\displaystyle \prod_{i=1}^{n}(x^2 - i^2)\) is a product of factors depending on \(x^2\), while \(Q_n\) is a product of factors depending on \((2x+1)^2\). When expanded, polynomials of this type contain no odd powers of \(x\) or of \((2x+1)\). Consequently, they admit representations of the following forms:

\[\begin{cases}
P_n(x) = x\displaystyle\sum_{k=0}^{n}g_{k,n}x^{2k} \\
\vspace{0.1cm}\\
Q_n(x)= \dfrac{1}{4^n}\displaystyle\sum_{k=0}^{n}h_{k,n}(2x+1)^{2k}
\end{cases}\]

Recurrence relations of the coefficients can be found by letting 
\[R_n (x) := \displaystyle\prod_{i=1}^{n}(x^2-i^2) \qquad \text{and} \qquad S_n(x) := \displaystyle\prod_{i=1}^{n}((2x+1)^2-(2i+1)^2) \]

$R_n$ and $S_n$ satisfy

\[\begin{cases}
R_n(x) = (x^2-n^2) R_{n-1}(x) \\
\vspace{0.1cm}\\
S_n(x)=(x^2-(2n-1)^2) S_{n-1}(x)
\end{cases}\]

This gives

\[\begin{cases}
\displaystyle\sum_{k=0}^{n}g_{k,n}x^{2k} = (x^2-n^2) \displaystyle\sum_{k=0}^{n-1}g_{k,n-1}x^{2k} \\
\vspace{0.1cm}\\
\displaystyle\sum_{k=0}^{n}h_{k,n}(2x+1)^{2k}=\displaystyle\sum_{k=0}^{n-1}h_{k,n-1}(2x+1)^{2k}-(2n-1)^2) \displaystyle\sum_{k=0}^{n-1}h_{k,n-1}(2x+1)^{2k}
\end{cases}\]

\[\Downarrow\]

\[\begin{cases}
\displaystyle\sum_{k=0}^{n}g_{k,n}x^{2k} = \displaystyle\sum_{k=0}^{n-1}g_{k,n-1}\,x^{2k+2} - n^2\displaystyle\sum_{k=0}^{n-1}g_{k,n-1}\,x^{2k} \\
\vspace{0.1cm}\\
\displaystyle\sum_{k=0}^{n}h_{k,n}(2x+1)^{2k}=\displaystyle\sum_{k=0}^{n-1}h_{k,n-1}\,(2x+1)^{2k+2}-(2n-1)^2 \displaystyle\sum_{k=0}^{n-1}h_{k,n-1}\,(2x+1)^{2k}
\end{cases}\]

\[\Downarrow\]

\[\begin{cases}
\displaystyle\sum_{k=0}^{n}g_{k,n}x^{2k} = \displaystyle\sum_{k=1}^{n}g_{k-1,n-1}\,x^{2k} - n^2\displaystyle\sum_{k=0}^{n-1}g_{k,n-1}\,x^{2k} \\
\vspace{0.1cm}\\
\displaystyle\sum_{k=0}^{n}h_{k,n}(2x+1)^{2k}=\displaystyle\sum_{k=1}^{n}h_{k-1,n-1}\,(2x+1)^{2k}-(2n-1)^2 \displaystyle\sum_{k=0}^{n-1}h_{k,n-1}\,(2x+1)^{2k}
\end{cases}\]

\vspace{0.5cm}

It follows that

\[\boxed{\begin{cases}
g_{k,n} = g_{k-1,n-1} - n^2\,g_{k,n-1} \\
\vspace{0.1cm}\\
h_{k,n}=h_{k-1,n-1}-(2n-1)^2 h_{k,n-1} \\
g_{k,n} = h_{k,n} = 0 \qquad \text{if} \qquad k > n \quad \text{or} \quad k < 0 \\
g_{0,0} = h_{0,0} = 1 \\
g_{k,1} = h_{k,1} = (-1)^{k+1} 
\end{cases}}\]
\end{proof}

\begin{proposition}\label[proposition]{prop:values_at_odd_integers}

\[\forall n \in \mathbb{N}^*, \exists \begin{pmatrix}
M_{0,n} \\
M_{1,n} \\
M_{2,n} \\
M_{3,n} \\
\vdots  \\
M_{n,n} 
\end{pmatrix}, \begin{pmatrix}
N_{0,n} \\
N_{1,n} \\
N_{2,n} \\
N_{3,n} \\
\vdots  \\
N_{n,n} 
\end{pmatrix} \in \mathbb{Q}^{n+1}:\]
\[\begin{cases}
\Phi_1(2n+1) = \displaystyle\sum_{i = 0}^n M_{i,n} \, \eta'(-2i-1)\\
\Phi_2(2n+1) = \displaystyle\sum_{i = 0}^n N_{i,n} \, \beta'(-2i)
\end{cases}\] where $\eta$ and $\beta$ are respectively the Dirichlet's eta and beta functions.
\end{proposition}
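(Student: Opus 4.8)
The plan is to convert each value $\Phi_j(2n+1)$ into the value at $s=0$ of a Mellin transform of an elementary hyperbolic function, to expand that function into a Dirichlet series attached to $\eta$ (respectively $\beta$), and to read off the result from the simple pole of $\Gamma$ at the origin. First I would apply the substitution $x=\tanh t$, under which $\operatorname{arctanh}x=t$, $dx=\cosh^{-2}t\,dt$ and $\sqrt{1-x^{2}}=\cosh^{-1}t$, turning the integrals into
\[
\Phi_1(2n+1)=\int_{0}^{\infty}\frac{1}{t}\,\frac{\sinh^{2n}t}{\cosh^{2n+2}t}\,dt,
\qquad
\Phi_2(2n+1)=\int_{0}^{\infty}\frac{1}{t}\,\frac{\sinh^{2n}t}{\cosh^{2n+1}t}\,dt .
\]
Both integrands are $O(t^{2n-1})$ as $t\to 0^{+}$ and decay exponentially as $t\to\infty$, so the Mellin transforms $I_1(s):=\int_{0}^{\infty}t^{s-1}\sinh^{2n}t\,\cosh^{-2n-2}t\,dt$ and $I_2(s):=\int_{0}^{\infty}t^{s-1}\sinh^{2n}t\,\cosh^{-2n-1}t\,dt$ are holomorphic on $\Re(s)>-2n$, and $I_j(0)=\Phi_j(2n+1)$ for $j=1,2$.

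Next I would expand the integrands in exponentials. With $q=e^{-2t}$,
\[
\frac{\sinh^{2n}t}{\cosh^{2n+2}t}=4q\,\frac{(1-q)^{2n}}{(1+q)^{2n+2}},
\qquad
\frac{\sinh^{2n}t}{\cosh^{2n+1}t}=2q^{1/2}\,\frac{(1-q)^{2n}}{(1+q)^{2n+1}} .
\]
Decomposing these (proper) rational functions of $q$ into partial fractions in the powers $(1+q)^{-k}$ and using $[q^{m}](1+q)^{-k}=(-1)^{m}\binom{m+k-1}{k-1}$ gives expansions $\sinh^{2n}t\,\cosh^{-2n-2}t=\sum_{m\ge1}(-1)^{m}p_{n}(m)e^{-2mt}$ and $\sinh^{2n}t\,\cosh^{-2n-1}t=2\sum_{m\ge0}(-1)^{m}\tilde p_{n}(m)e^{-(2m+1)t}$, with $p_{n},\tilde p_{n}\in\mathbb{Q}[X]$ of degree at most $2n+1$ and $2n$ respectively. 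Integrating term by term (valid for $\Re(s)$ large and then continued) yields
\[
I_1(s)=\frac{\Gamma(s)}{2^{s}}\sum_{m\ge1}\frac{(-1)^{m}p_{n}(m)}{m^{s}},
\qquad
I_2(s)=2\,\Gamma(s)\sum_{m\ge0}\frac{(-1)^{m}\tilde p_{n}(m)}{(2m+1)^{s}} .
\]

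The decisive point is a parity statement for $p_{n}$ and $\tilde p_{n}$. After the substitution $Q=-q$, the ordinary generating functions $\sum_{m\ge0}p_{n}(m)Q^{m}$ and $\sum_{m\ge0}\tilde p_{n}(m)Q^{m}$ become the rational functions $F(Q)=-4Q(1+Q)^{2n}(1-Q)^{-2n-2}$ and $\tilde F(Q)=(1+Q)^{2n}(1-Q)^{-2n-1}$, which satisfy $F(1/Q)=F(Q)$ and $\tilde F(1/Q)=-Q\,\tilde F(Q)$ (here the parities of the exponents $2n+2$ and $2n+1$ are what matter). By the reciprocity theorem for proper rational generating functions — or by a direct computation of $p_n(-m)$, $\tilde p_n(-m)$ from the binomial coefficients — these symmetries force $p_{n}(-X)=-p_{n}(X)$ and $\tilde p_{n}(-1-X)=\tilde p_{n}(X)$, so that $p_{n}$ is odd and $\tilde p_{n}$ is a polynomial in $(2X+1)^{2}$. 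Writing $p_{n}(X)=\sum_{i=0}^{n}\alpha_{i,n}X^{2i+1}$ and $\tilde p_{n}(X)=\sum_{i=0}^{n}\gamma_{i,n}(2X+1)^{2i}$ with rational $\alpha_{i,n},\gamma_{i,n}$, and using $\sum_{m\ge1}(-1)^{m}m^{-w}=-\eta(w)$ and $\sum_{m\ge0}(-1)^{m}(2m+1)^{-w}=\beta(w)$, the Dirichlet series collapse to finite sums:
\[
I_1(s)=-\frac{\Gamma(s)}{2^{s}}\sum_{i=0}^{n}\alpha_{i,n}\,\eta(s-2i-1),
\qquad
I_2(s)=2\,\Gamma(s)\sum_{i=0}^{n}\gamma_{i,n}\,\beta(s-2i).
\]
Each bracketed sum is entire; since $I_1,I_2$ are holomorphic at $s=0$ while $\Gamma$ has there a simple pole of residue $1$, both sums must vanish at $s=0$, i.e. $\sum_{i}\alpha_{i,n}\eta(-2i-1)=0$ and $\sum_{i}\gamma_{i,n}\beta(-2i)=0$. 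Letting $s\to0$ then gives
\[
\Phi_1(2n+1)=-\sum_{i=0}^{n}\alpha_{i,n}\,\eta'(-2i-1),
\qquad
\Phi_2(2n+1)=2\sum_{i=0}^{n}\gamma_{i,n}\,\beta'(-2i),
\]
which is the assertion with $M_{i,n}=-\alpha_{i,n}$ and $N_{i,n}=2\gamma_{i,n}$.

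The main obstacle I foresee is the parity step: applying the reciprocity theorem carefully in the borderline situation where the numerator has degree exactly one below the denominator, and — if one wants a self-contained argument rather than relying on the convergence of $\Phi_j(2n+1)$ — proving the vanishing relations $\sum_{i}\alpha_{i,n}\eta(-2i-1)=0$ and $\sum_{i}\gamma_{i,n}\beta(-2i)=0$ directly. Since $\eta(-2i-1)$ is a rational multiple of $B_{2i+2}$ and $\beta(-2i)$ a rational multiple of $E_{2i}$, these are bona fide arithmetic identities among the partial-fraction coefficients, of the same nature as the vanishing phenomena exploited elsewhere in the paper. Deducing closed forms for $\alpha_{i,n},\gamma_{i,n}$ — hence for $M_{i,n},N_{i,n}$ — from the partial fractions is then routine bookkeeping, and is where the Taylor coefficients $c_{k,n}$, $d_{k,n}$ and the Eulerian numbers come in.
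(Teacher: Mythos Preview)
Your argument is correct and follows the same broad arc as the paper's proof---substitute $x=\tanh t$, expand the resulting hyperbolic integrand as a signed exponential series, establish a parity property of the coefficient polynomial, and extract the values $\eta'(-2i-1)$, $\beta'(-2i)$---but you implement the two critical steps differently. For the parity step, the paper factorises the polynomials $P_k(l)=\prod_{i=1}^{2k+1}(l-k-1+i)$ and $Q_k(l)=\prod_{i=1}^{2k}(l-k+i)$ by hand (its Lemma~1.1) to show that they are odd in~$l$ and even in~$2l+1$ respectively; you obtain the same conclusion more structurally, from the functional equations $F(1/Q)=F(Q)$ and $\tilde F(1/Q)=-Q\tilde F(Q)$ via Stanley-type reciprocity for rational generating functions. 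For the extraction step, the paper subtracts a regularising exponential and applies Frullani's integral $\int_0^\infty(e^{-au}-e^{-bu})\,du/u=\ln(b/a)$ termwise; you instead introduce the Mellin transform $I_j(s)$, continue the identity $I_j(s)=\Gamma(s)\cdot(\text{finite $\eta$ or $\beta$ sum})$ from large $\Re(s)$, and let the simple pole of $\Gamma$ at $s=0$ force the vanishing relation and produce the derivative. These are two faces of the same phenomenon (Frullani is the $s\to0$ limit of $\Gamma(s)(a^{-s}-b^{-s})$), but your packaging has a concrete advantage here: the paper itself concedes after its proof that ``the proof presented therein does not fully meet the standards of mathematical rigor, due primarily to the lack of a proper justification for the interchange of summation and integration,'' whereas your analytic-continuation argument needs the termwise identity only for $\Re(s)$ large, where it is trivial, and then invokes uniqueness of meromorphic continuation. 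The price you pay is relying on the reciprocity theorem as a black box; the paper's explicit factorisation is more elementary and also yields the recurrences for the coefficients $g_{i,k}$, $h_{i,k}$ that feed into the closed forms for $M_{i,n}$, $N_{i,n}$.
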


\begin{proof}
\[\begin{cases}
\Phi_1(2n+1) = \displaystyle\int_0^1 \dfrac{x^{2n}}{\operatorname{arctanh}x}\,dx\\
\hspace{0.5cm}\\
\Phi_2(2n+1) = \displaystyle\int_0^1 \dfrac{x^{2n}}{\sqrt{1-x^2}\operatorname{arctanh}x}\,dx\\
\end{cases}\]

The change of variable $x = \tanh u$ yields

\[\begin{cases}
\Phi_1(2n+1) = \displaystyle\int_0^{+\infty} \dfrac{\tanh^{2n}u}{u}\dfrac{1}{\cosh^2 u}\,du\\
\hspace{0.5cm} \\
\Phi_2(2n+1) = \displaystyle\int_0^{+\infty} \dfrac{\tanh^{2n}u}{u}\dfrac{1}{\cosh u}\,du\\
\end{cases}\]

Bearing in mind that $\tanh^2 u = 1 - \dfrac{1}{\cosh^2u}$, the Newton's binomial yields \[\tanh^{2n}u = \displaystyle\sum_{k=0}^{n}(-1)^k \binom{n}{k}\dfrac{1}{\cosh^{2k}u}\] Substituting this identity, we obtain

\[\begin{cases}
\Phi_1(2n+1) = \displaystyle\int_0^{+\infty} \dfrac{1}{u}\sum_{k=0}^{n}(-1)^k \binom{n}{k}\dfrac{1}{\cosh^{2k+2}u}\,du\\
\hspace{0.5cm} \\
\Phi_2(2n+1) = \displaystyle\int_0^{+\infty} \dfrac{1}{u}\sum_{k=0}^{n}(-1)^k \binom{n}{k}\dfrac{1}{\cosh^{2k+1}u}\,du\\
\end{cases}\]

This, in turn, implies the following identities after invoking the definition of \(\cosh\) and multiplying both the numerator and the denominator by appropriate powers of \(e\) so that each denominator can be expressed as a power of \((1 + e^{-2k})\).

\[\begin{cases}
\Phi_1(2n+1) = \displaystyle\int_0^{+\infty} \dfrac{1}{u}\sum_{k=0}^{n}(-1)^k \binom{n}{k}\dfrac{2^{2k+2} e^{-2(k+1)u}}{(1+e^{-2u})^{2k+2}}\,du\\
\hspace{0.5cm} \\
\Phi_2(2n+1) = \displaystyle\int_0^{+\infty} \dfrac{1}{u}\sum_{k=0}^{n}(-1)^k \binom{n}{k}\dfrac{2^{2k+1} e^{-(2k+1)u}}{(1+e^{-2u})^{2k+1}}\,du\\
\end{cases}\]

The well-known binomial geometric series $\dfrac{1}{(1-x)^{n+1}} = \displaystyle\sum_{l = 0}^{\infty}\binom{n+l}{n}x^l$ leads to

\[\begin{cases}
\Phi_1(2n+1) = \displaystyle\int_0^{+\infty} \dfrac{1}{u}\sum_{k=0}^{n}(-1)^k \binom{n}{k}2^{2k+2} e^{-2(k+1)u}\displaystyle\sum_{l = 0}^{\infty}(-1)^l\binom{2k+1+l}{2k+1}e^{-2lu}\,du\\
\hspace{0.5cm} \\
\Phi_2(2n+1) = \displaystyle\int_0^{+\infty} \dfrac{1}{u}\sum_{k=0}^{n}(-1)^k \binom{n}{k}2^{2k+1} e^{-(2k+1)u}\displaystyle\sum_{l = 0}^{\infty}(-1)^l\binom{2k+l}{2k}e^{-2lu}\,du\\
\end{cases}\]

Interestingly, the binomial coefficients $\displaystyle\binom{2k+l}{2k}$ and $\displaystyle\binom{2k+1+l}{2k+1}$ can be written in products forms as follows

\[\begin{cases}
\Phi_1(2n+1) = \displaystyle\int_0^{+\infty} \dfrac{1}{u}\sum_{k=0}^{n}(-1)^k \binom{n}{k}2^{2k+2} e^{-2(k+1)u}\displaystyle\sum_{l = 0}^{\infty}(-1)^l\dfrac{1}{(2k+1)!}\prod_{i=1}^{2k+1}(l+i) \,\,e^{-2lu}\,du\\
\hspace{0.5cm} \\
\Phi_2(2n+1) = \displaystyle\int_0^{+\infty} \dfrac{1}{u}\sum_{k=0}^{n}(-1)^k \binom{n}{k}2^{2k+1} e^{-(2k+1)u}\displaystyle\sum_{l = 0}^{\infty}(-1)^l\dfrac{1}{(2k)!}\prod_{i=1}^{2k}(l+i)\,\,e^{-2lu}\,du\\
\end{cases}\]

Or equivalently

\[\begin{cases}
\Phi_1(2n+1) = \displaystyle\int_0^{+\infty} \dfrac{1}{u}\sum_{k=0}^{n}(-1)^k \binom{n}{k}\dfrac{2^{2k+2}}{(2k+1)!}\displaystyle\sum_{l = 0}^{\infty}(-1)^l\prod_{i=1}^{2k+1}(l+i) \,\,e^{-2(k+l+1)u}\,du\\
\hspace{0.5cm} \\
\Phi_2(2n+1) = \displaystyle\int_0^{+\infty} \dfrac{1}{u}\sum_{k=0}^{n}(-1)^k \binom{n}{k}\dfrac{2^{2k+1}}{(2k)!}\displaystyle\sum_{l = 0}^{\infty}(-1)^l\prod_{i=1}^{2k}(l+i)\,\,e^{-(2l+2k+1)u}\,du\\
\end{cases}\]

After reindexing $l$, we get

\[\begin{cases}
\Phi_1(2n+1) = \displaystyle\int_0^{+\infty} \dfrac{1}{u}\sum_{k=0}^{n}(-1)^k \binom{n}{k}\dfrac{2^{2k+2}}{(2k+1)!}\displaystyle\sum_{l = k+1}^{\infty}(-1)^{l-k-1}\prod_{i=1}^{2k+1}(l-k-1+i) \,\,e^{-2lu}\,du\\
\hspace{0.5cm} \\
\Phi_2(2n+1) = \displaystyle\int_0^{+\infty} \dfrac{1}{u}\sum_{k=0}^{n}(-1)^k \binom{n}{k}\dfrac{2^{2k+1}}{(2k)!}\displaystyle\sum_{l = k}^{\infty}(-1)^{l-k}\prod_{i=1}^{2k}(l-k+i)\,\,e^{-(2l+1)u}\,du\\
\end{cases}\]

The first property of \cref{lemma:polynomials} guarantees that the index \(l\) may effectively start at \(1\) in the first sum and at \(0\) in the second, since the corresponding coefficients vanish identically; more precisely, this holds for all \(l \in \llbracket 1, k \rrbracket\) and for all \(l \in \llbracket 1, k-1 \rrbracket\), respectively. Consequently,

\[\begin{cases}
\Phi_1(2n+1) = \displaystyle\int_0^{+\infty} \dfrac{1}{u}\sum_{k=0}^{n}(-1)^k \binom{n}{k}\dfrac{2^{2k+2}}{(2k+1)!}\displaystyle\sum_{l = 1}^{\infty}(-1)^{l-k-1}\prod_{i=1}^{2k+1}(l-k-1+i) \,\,e^{-2lu}\,du\\
\hspace{0.5cm} \\
\Phi_2(2n+1) = \displaystyle\int_0^{+\infty} \dfrac{1}{u}\sum_{k=0}^{n}(-1)^k \binom{n}{k}\dfrac{2^{2k+1}}{(2k)!}\displaystyle\sum_{l = 0}^{\infty}(-1)^{l-k}\prod_{i=1}^{2k}(l-k+i)\,\,e^{-(2l+1)u}\,du\\
\end{cases}\]

The second property of \cref{lemma:polynomials} gives

\[\begin{cases}
\Phi_1(2n+1) = \displaystyle\int_0^{+\infty} \dfrac{1}{u}\sum_{k=0}^{n}(-1)^k \binom{n}{k}\dfrac{2^{2k+2}}{(2k+1)!}\displaystyle\sum_{l = 1}^{\infty}(-1)^{l-k-1}\sum_{i = 0}^k g_{i,k} l^{2i+1} \,\,e^{-2lu}\,du\\
\hspace{0.5cm} \\
\Phi_2(2n+1) = \displaystyle\int_0^{+\infty} \dfrac{1}{u}\sum_{k=0}^{n}(-1)^k \binom{n}{k}\dfrac{2^{2k+1}}{(2k)!}\displaystyle\sum_{l = 0}^{\infty}(-1)^{l-k}\dfrac{1}{4^k}\sum_{i = 0}^k h_{i,k} (2l+1)^{2i}e^{-(2l+1)u}\,du\\
\end{cases}\]

After simplification

\[\begin{cases}
\Phi_1(2n+1) = \displaystyle\int_0^{+\infty} \dfrac{1}{u}\sum_{k=0}^{n} \binom{n}{k}\dfrac{2^{2k+2}}{(2k+1)!}\displaystyle\sum_{l = 1}^{\infty}(-1)^{l-1}\sum_{i = 0}^k g_{i,k}\, l^{2i+1} \,\,e^{-2lu}\,du\\
\hspace{0.5cm} \\
\Phi_2(2n+1) = \displaystyle\int_0^{+\infty} \dfrac{1}{u}\sum_{k=0}^{n} \binom{n}{k}\dfrac{2}{(2k)!}\displaystyle\sum_{l = 0}^{\infty}(-1)^{l}\sum_{i = 0}^k h_{i,k}\, (2l+1)^{2i}e^{-(2l+1)u}\,du\\
\end{cases}\]

Remarkably, the sums after the term $ 1/u$ are in fact respectively expansions of $\tanh^{2n+1}u \dfrac{1}{\cosh^2 u}$ and $\tanh^{2n}u \dfrac{1}{\cosh u}$. Formally, one has

\[\begin{cases}
\tanh^{2n+1}u \dfrac{1}{\cosh^2 u} = \displaystyle\sum_{k=0}^{n} \binom{n}{k}\dfrac{2^{2k+2}}{(2k+1)!}\displaystyle\sum_{l = 1}^{\infty}(-1)^{l-1}\sum_{i = 0}^k g_{i,k}\, l^{2i+1} \,\,e^{-2lu}\\
\hspace{0.5cm} \\
\tanh^{2n}u \dfrac{1}{\cosh u} = \displaystyle\sum_{k=0}^{n} \binom{n}{k}\dfrac{2}{(2k)!}\displaystyle\sum_{l = 0}^{\infty}(-1)^{l}\sum_{i = 0}^k h_{i,k}\, (2l+1)^{2i}e^{-(2l+1)u}\\
\end{cases}\]

Evaluating at $u = 0$ yields the following regularizations

\[\begin{cases}
\displaystyle\sum_{k=0}^{n} \binom{n}{k}\dfrac{2^{2k+2}}{(2k+1)!}\displaystyle\sum_{l = 1}^{\infty}(-1)^{l-1}\sum_{i = 0}^k g_{i,k}\, l^{2i+1} = 0\\
\hspace{0.5cm} \\
\displaystyle\sum_{k=0}^{n} \binom{n}{k}\dfrac{2}{(2k)!}\displaystyle\sum_{l = 0}^{\infty}(-1)^{l}\sum_{i = 0}^k h_{i,k}\, (2l+1)^{2i}=0\\
\end{cases}\]

After multiplying both sides of the first equality by $e^{-2u}$ and those of the second by $e^{-u}$, we get

\[\begin{cases}
\displaystyle\sum_{k=0}^{n} \binom{n}{k}\dfrac{2^{2k+2}}{(2k+1)!}\displaystyle\sum_{l = 1}^{\infty}(-1)^{l-1}\sum_{i = 0}^k g_{i,k}\, l^{2i+1} e^{-2u} = 0\\
\hspace{0.5cm} \\
\displaystyle\sum_{k=0}^{n} \binom{n}{k}\dfrac{2}{(2k)!}\displaystyle\sum_{l = 0}^{\infty}(-1)^{l}\sum_{i = 0}^k h_{i,k}\, (2l+1)^{2i}e^{-2u}=0\\
\end{cases}\]

The integrals become then

\[\begin{cases}
\Phi_1(2n+1) = \displaystyle\int_0^{+\infty} \dfrac{1}{u}\sum_{k=0}^{n} \binom{n}{k}\dfrac{2^{2k+2}}{(2k+1)!}\displaystyle\sum_{l = 1}^{\infty}(-1)^{l-1}\sum_{i = 0}^k g_{i,k}\, l^{2i+1} \,\,\left[e^{-2lu} - e^{-2u}\right]\,du\\
\hspace{0.5cm} \\
\Phi_2(2n+1) = \displaystyle\int_0^{+\infty} \dfrac{1}{u}\sum_{k=0}^{n} \binom{n}{k}\dfrac{2}{(2k)!}\displaystyle\sum_{l = 0}^{\infty}(-1)^{l}\sum_{i = 0}^k h_{i,k}\, (2l+1)^{2i}\left[e^{-(2l+1)u} - e^{-u}\right]\,du\\
\end{cases}\]

Interchanging summation and integration allows us to write

\[\begin{cases}
\Phi_1(2n+1) = \displaystyle\sum_{k=0}^{n} \binom{n}{k}\dfrac{2^{2k+2}}{(2k+1)!}\displaystyle\sum_{l = 1}^{\infty}(-1)^{l-1}\sum_{i = 0}^k g_{i,k}\, l^{2i+1} \,\,\int_0^{+\infty} \dfrac{1}{u}\left[e^{-2lu} - e^{-2u}\right]\,du\\
\hspace{0.5cm} \\
\Phi_2(2n+1) = \displaystyle\sum_{k=0}^{n} \binom{n}{k}\dfrac{2}{(2k)!}\displaystyle\sum_{l = 0}^{\infty}(-1)^{l}\sum_{i = 0}^k h_{i,k}\, (2l+1)^{2i}\int_0^{+\infty} \dfrac{1}{u}\left[e^{-(2l+1)u} - e^{-u}\right]\,du\\
\end{cases}\]

The Frullani's integral $\displaystyle\int_0^{+\infty}\dfrac{e^{-ax}-e^{-bx}}{x}dx = \ln b/a$ yields

\[\begin{cases}
\Phi_1(2n+1) = \displaystyle\sum_{k=0}^{n} \binom{n}{k}\dfrac{2^{2k+2}}{(2k+1)!}\displaystyle\sum_{l = 1}^{\infty}(-1)^{l-1}\sum_{i = 0}^k g_{i,k}\, l^{2i+1} \,\,(-\ln l)\\
\hspace{0.5cm} \\
\Phi_2(2n+1) = \displaystyle\sum_{k=0}^{n} \binom{n}{k}\dfrac{2}{(2k)!}\displaystyle\sum_{l = 0}^{\infty}(-1)^{l}\sum_{i = 0}^k h_{i,k}\, (2l+1)^{2i}(-\ln(2l+1))\\
\end{cases}\]

Now, $i$ runs up to $k$, which in turn runs up to $n$. This means $i$ runs up to $n$. Taking this observation into account and rearranging, we have

\[\begin{cases}
\Phi_1(2n+1) = \displaystyle\sum_{i = 0}^n\sum_{k=i}^{n} \binom{n}{k}\dfrac{2^{2k+2}}{(2k+1)!}\displaystyle g_{i,k}\,\sum_{l = 1}^{\infty}(-1)^{l-1} l^{2i+1} \,\,(-\ln l)\\
\hspace{0.5cm} \\
\Phi_2(2n+1) = \displaystyle\sum_{i = 0}^n\sum_{k=i}^{n} \binom{n}{k}\dfrac{2}{(2k)!}\displaystyle h_{i,k}\, \sum_{l = 0}^{\infty}(-1)^{l}(2l+1)^{2i}(-\ln(2l+1))\\
\end{cases}\]

The latter series are respectively the derivatives of the Dirichlet eta and beta function. We conclude that

\[\begin{cases}
\Phi_1(2n+1) = \displaystyle\sum_{i = 0}^n\left[\sum_{k=i}^{n} \binom{n}{k}\dfrac{2^{2k+2}}{(2k+1)!}\displaystyle g_{i,k}\right]\,\eta'(-2i-1)\\
\hspace{0.5cm} \\
\Phi_2(2n+1) = \displaystyle\sum_{i = 0}^n\left[\sum_{k=i}^{n} \binom{n}{k}\dfrac{2}{(2k)!}\displaystyle h_{i,k}\,\right] \beta'(-2i)\\
\end{cases}\]
This concludes the proof.
\end{proof}

For the first values of $n$, one has

\[
\begin{cases}
\displaystyle
\int_0^1 \dfrac{x^{2}}{\operatorname{arctanh}x}\,dx
= \dfrac{4}{3}\,\eta'(-1) + \dfrac{8}{3}\,\eta'(-3),\\[10pt]
\hspace{0.5cm} \\
\displaystyle
\int_0^1 \dfrac{x^{2}}{\sqrt{1-x^2}\,\operatorname{arctanh}x}\,dx
= \beta'(0) + \beta'(-2).
\end{cases}
\]

\vspace{0.3cm}

\[
\begin{cases}
\displaystyle
\int_0^1 \dfrac{x^{4}}{\operatorname{arctanh}x}\,dx
= \dfrac{4}{5}\,\eta'(-1)
+ \dfrac{8}{3}\,\eta'(-3)
+ \dfrac{8}{15}\,\eta'(-5),
\\
\hspace{0.5cm}\\
\displaystyle
\int_0^1 \dfrac{x^{4}}{\sqrt{1-x^2}\,\operatorname{arctanh}x}\,dx
= \dfrac{3}{4}\,\beta'(0)
+ \dfrac{7}{6}\,\beta'(-2)
+ \dfrac{1}{12}\,\beta'(-4).
\end{cases}
\]

\vspace{0.3cm}

\[
\begin{cases}
\displaystyle
\int_0^1 \dfrac{x^{6}}{\operatorname{arctanh}x}\,dx
= \displaystyle\frac{4}{7}\,\eta'(-1) + \frac{112}{45}\,\eta'(-3) + \frac{8}{9}\,\eta'(-5) + \frac{16}{315}\,\eta'(-7),\\
\hspace{0.1cm}\\
\displaystyle
\int_0^1 \dfrac{x^{6}}{\sqrt{1-x^2}\,\operatorname{arctanh}x}\,dx
= \displaystyle\frac{5}{8}\,\beta'(0) + \frac{439}{360}\,\beta'(-2) + \frac{11}{72}\,\beta'(-4) + \frac{1}{360}\,\beta'(-6).
\end{cases}
\]

\vspace{0.3cm}

\[
\begin{cases}
\displaystyle
\int_0^1 \frac{x^{8}}{\operatorname{arctanh}x}\,dx
= \frac{4}{9}\,\eta'(-1) + \frac{6\,544}{2\,835}\,\eta'(-3) + \frac{152}{135}\,\eta'(-5) + \frac{16}{135}\,\eta'(-7) + \frac{8}{2\,835}\,\eta'(-9),\\
\hspace{0.1cm}\\
\displaystyle
\int_0^1 \frac{x^{8}}{\sqrt{1-x^2}\,\operatorname{arctanh}x}\,dx
= \frac{35}{64}\,\beta'(0) + \frac{1\,247}{1\,008}\,\beta'(-2) + \frac{301}{1\,440}\,\beta'(-4) + \frac{1}{144}\,\beta'(-6) + \frac{1}{20\,160}\,\beta'(-8).
\end{cases}
\]

\vspace{1cm}

From these identities, we deduce that the coefficient of \(\eta'(-1)\) in \(\Phi_1(2n+1)\) is always equal to \(\dfrac{4}{2n+1}\). Indeed, it suffices to show that
\[
\sum_{k=0}^{n} \binom{n}{k}\frac{2^{2k}}{(2k+1)!}\, g_{0,k} = \frac{1}{2n+1}.
\]
By a straightforward induction argument, one readily verifies that
\[
g_{0,k} = (-1)^k (k!)^2.
\]

\vspace{0.5cm}

\begin{proposition}\label[proposition]{prop:coefficient_of_eta_minus_one}
\[\forall n \in \mathbb{N}_0, \,\,\sum_{k=0}^{n} \binom{n}{k}4^k(-1)^k \dfrac{1}{(2k+1)!}\, (k!)^2 = \frac{1}{2n+1}\]
\end{proposition}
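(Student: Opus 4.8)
\section*{Proof proposal}

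The plan is to recognise the rational factor in the summand as an Euler Beta integral and then collapse the binomial sum. First I would use the classical evaluation
\[
\frac{(k!)^2}{(2k+1)!} = B(k+1,k+1) = \int_0^1 t^k(1-t)^k\,dt .
\]
Substituting this into the left-hand side, writing $4^k(-1)^k = (-4)^k$, and interchanging the finite sum with the integral gives, by the binomial theorem,
\[
\sum_{k=0}^{n}\binom{n}{k}4^k(-1)^k\frac{(k!)^2}{(2k+1)!}
= \int_0^1 \sum_{k=0}^{n}\binom{n}{k}\bigl(-4t(1-t)\bigr)^{k}\,dt
= \int_0^1 \bigl(1-4t(1-t)\bigr)^{n}\,dt .
\]

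Next I would invoke the elementary factorisation $1-4t(1-t) = 1-4t+4t^2 = (1-2t)^2$, so the integral becomes $\int_0^1 (1-2t)^{2n}\,dt$. The substitution $u = 1-2t$ turns this into $\tfrac12\int_{-1}^{1} u^{2n}\,du = \tfrac{1}{2n+1}$, which is exactly the asserted value; for $n=0$ the identity reduces to $1=1$, consistent with the stated convention.

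I do not anticipate any genuine obstacle: each step is a one-line manipulation, and the only thing one has to \emph{see} is the Beta-integral representation of $(k!)^2/(2k+1)!$ together with the identity $4t(1-t) = 1-(1-2t)^2$. Should one wish to avoid integral representations altogether, an alternative is induction on $n$: the Pascal relation $\binom{n}{k} = \binom{n-1}{k}+\binom{n-1}{k-1}$ expresses the $n$-th sum in terms of the $(n-1)$-th together with a shifted sum, and after simplifying the ratio $\tfrac{(k!)^2}{(2k+1)!}$ one recovers $\tfrac{1}{2n+1}$ from $\tfrac{1}{2n-1}$; but the Beta-integral argument is shorter and more transparent, so that is the route I would present.
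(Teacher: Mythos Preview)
Your proof is correct and follows essentially the same route as the paper: both recognise $\dfrac{(k!)^2}{(2k+1)!}$ as the Beta integral $\int_0^1 t^k(1-t)^k\,dt$, collapse the binomial sum to $\int_0^1 (1-4t(1-t))^n\,dt = \int_0^1 (2t-1)^{2n}\,dt$, and evaluate. The only cosmetic difference is that you make the final substitution $u=1-2t$ explicit, whereas the paper simply states that the last integral is easily evaluated.
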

\begin{proof}

Let \[S_n := \sum_{k=0}^{n} \binom{n}{k}4^k(-1)^k \dfrac{1}{(2k+1)!}\, (k!)^2\]

The term $\dfrac{1}{(2k+1)!}\, (k!)^2$ is recognized with the Euler's $\Gamma$ and B-functions as follows

\[\dfrac{1}{(2k+1)!}\, (k!)^2 = \frac{\Gamma(k+1)\Gamma(k+1)}{\Gamma(2k+2)} = B(k+1, k+1) = \int_0^1 x^k\,(1-x)^k\,dx\]

Consequently

\[S_n = \sum_{k=0}^{n} \binom{n}{k}4^k(-1)^k \int_0^1 x^k\,(1-x)^k\,dx = \int_0^1\sum_{k=0}^{n} \binom{n}{k}4^k(-1)^k  x^k\,(1-x)^k\,dx\]

Summation and integration can be trivially swapped here since the sum is finite. After simplification

\[S_n = \int_0^1\sum_{k=0}^{n} \binom{n}{k}(4x^2-4x)^k\,dx = \int_0^1\sum_{k=0}^{n} \binom{n}{k}(4x^2-4x)^k (1)^{n-k}\,dx\]

The Newton's binomial formula is easily recognized. Thus

\[S_n = \int_0^1(4x^2-4x+1)^n \, dx = \int_0^1(2x-1)^{2n} \, dx\] 

The last integral is easily evaluated. This concludes the proof.
\end{proof}

\vspace{1cm}

Although the results established in \cref{prop:values_at_odd_integers} are correct, the proof
presented therein does not fully meet the standards of mathematical
rigor, due primarily to the lack of a proper justification for the
interchange of summation and integration. In
\cref{sec:first_integral}, we investigate a broader class of integrals
that generalizes \cref{prop:values_at_odd_integers} in a natural way. The method employed is
based on contour integration techniques. While the resulting closed-form
expressions may at first appear different, they can in fact be
transformed into one another by means of the functional equations of the
associated \(L\)-functions.
	
\vspace{0.5cm}

\section{\hspace{0.3cm}On the integral $\displaystyle\int_0^{+\infty} \dfrac{\sinh^{2q+1}z\,\ln z}{\cosh^n z}\,dz$}\label{sec:second_integral}

\vspace{0.3cm}

\textit{Blagouchine \textnormal{\cite{Blagouchine2014}} investigated the integral
\[
\int_{0}^{+\infty} \frac{\ln z}{\cosh^{n} z}\,dz
\]
and derived closed-form expressions for several low-order cases. By
means of suitable linearizations, these results extend to integrals of
the form
\[
\int_{0}^{+\infty} \frac{\sinh^{2q} z\,\ln z}{\cosh^{n} z}\,dz.
\]
In the present section, we focus on the complementary family of
integrals
\[
\int_{0}^{+\infty} \frac{\sinh^{2q+1} z\,\ln z}{\cosh^{n} z}\,dz,
\]. Blagouchine \cite{Blagouchine2014} derived the integral
\[
\int_{0}^{+\infty} \frac{\ln z}{\cosh^{n} z}\,dz
\]
from the auxiliary integral
\[
\int_{0}^{+\infty} \frac{\ln (z^2 + a^2)}{\cosh^{n} z}\,dz,
\]
whose integrand is an even function. This symmetry allows one to extend the domain of integration to \((-\infty,\infty)\) and to evaluate the limit as \(a \to 0\).
In our case, however, the integrand
\[
\frac{\sinh^{2q+1} z\, \ln(z^2 + a^2)}{\cosh^{n} z}
\]
is an odd function of the variable \(z\). Consequently, extending the integral to the whole real line,
\[
\int_{-\infty}^{+\infty} \frac{\sinh^{2q+1} z\, \ln(z^2 + a^2)}{\cosh^{n} z}\,dz,
\]
would trivially yield zero. Before proceeding further, we recall that the conditions \(n > 2q + 1\) and \(q \ge 0\) are required to ensure convergence. Nevertheless, we again employ contour integration techniques to evaluate the original integral, after introducing suitable modifications.} 

\vspace{0.3cm}
In particular, upon introducing the substitution \( z = e^{u} \), we obtain

\[
\int_0^{+\infty} \frac{\sinh^{2q+1} z\,\ln z}{\cosh^{n} z}\,dz
=
\int_{-\infty}^{+\infty}
u\,e^{u}\,
\frac{\sinh^{2q+1}(e^{u})}{\cosh^{n}(e^{u})}\,du,
\]

The bounds of integration are now \( -\infty \) and \( +\infty \), which are precisely those required for a standard contour integration. The contour of integration is the rectangle
\[
\Gamma_R := [-R, R] \cup [R, R + i\pi] \cup [R + i\pi, -R + i\pi] \cup [-R + i\pi, -R]
\]

The function being integrated on the contour $\Gamma_R$ is given by 

\[u \to u^2\, f_{q,n}(u) \quad \text{where} \quad f_{q,n}(u) := \frac{e^u\,\sinh^{2q+1}(e^{u})}{\cosh^{n}(e^{u})} \]

Bearing in mind that the contour of integration is counterclockwise, we have the following equality:

\[\ointctrclockwise_{\Gamma_R}z^2 f_{q,n}(z) = \int_{-R}^R z^2 f_{q,n}(z) \, dz + \int_{R}^{R+i \pi} z^2 f_{q,n}(z) \, dz + \int_{R+i\pi}^{-R+i\pi} z^2  f_{q,n}(z)\, dz + \int_{-R+i\pi}^{-R} z^2 f_{q,n}(z) \, dz\]

Performing the substitutions $z = R + ix$, $z = x + i \pi$ and $z = -R + ix$ respectively in the third, fourth and fifth integral yields

\begin{align*}
\ointctrclockwise_{\Gamma_R}z^2 f_{q,n}(z) = &\int_{-R}^R z^2 f_{q,n}(z) \, dz + i\int_{0}^{ \pi} (R + iz)^2 f_{q,n}(R + iz) \, dz - \int_{-R}^{R} (z + i\pi)^2  f_{q,n}(z + i\pi)\, dz\\ &- i\int_{0}^{\pi} (-R + iz)^2 f_{q,n}(-R+iz) \, dz\end{align*}

\vspace{0.3cm}

Now, $f_{q,n}(z + i\pi) = f_{q,n}(z)$. Consequently

\begin{align*}
\ointctrclockwise_{\Gamma_R}z^2 f_{q,n}(z) = &-2i\pi\int_{-R}^R z f_{q,n}(z) \, dz + \pi^2 \int_{-R}^R f_{q,n}(z) \, dz + i\int_{0}^{ \pi} (R + iz)^2 f_{q,n}(R + iz) \, dz \\ &- i\int_{0}^{\pi} (-R + iz)^2 f_{q,n}(-R+iz) \, dz\end{align*}

Under the assumption that $n > 2q+1$, the two last integrals decay as $R$ grows large accordingly to \cref{lemma:vanishing_verticals}. Hence

\begin{equation}
\ointctrclockwise_{\Gamma_\infty}z^2 f_{q,n}(z) = -2i\pi\int_{-\infty}^{+\infty} z f_{q,n}(z) \, dz + \pi^2 \int_{-\infty}^{+\infty} f_{q,n}(z) \, dz \label{eq:infinite_integral}\end{equation}

The poles of the integrand are the solutions of
\[
e^{u} = \pm \frac{(2l+1)i\pi}{2},
\]
which are given by the set
\[
\left\{ \ln \frac{(2l+1)\pi}{2} + i\pi\!\left(2m + \frac{1}{2}\right) \right\}
\;\cup\;
\left\{ \ln \frac{(2l+1)\pi}{2} + i\pi\!\left(2m + \frac{3}{2}\right) \right\},
\qquad l \in \mathbb{N}_0,\; m \in \mathbb{Z}.
\]
Those lying inside the contour of integration \( \Gamma_R \) correspond to \( m = 0 \), yielding the subset
\[
\left\{ \ln \frac{(2l+1)\pi}{2} + \frac{i\pi}{2} \right\},
\qquad l \in \mathbb{N}_0.
\]
Accordingly, these poles are denoted by
\[
u_l := \ln \frac{(2l+1)\pi}{2} + \frac{i\pi}{2},
\qquad l \in \mathbb{N}_0.
\]

Cauchy's residue theorem together with \eqref{eq:infinite_integral} yields

\begin{equation}
\begin{cases}\displaystyle\int_0^{+\infty} \dfrac{\sinh^{2q+1}z\,\ln z}{\cosh^n z}\,dz = -\Re\left\{\sum_{l = 0}^{\infty}\underset{u = u_l}{\text{Res}}\left( \dfrac{u^2 \,e^u\, \sinh^{2q+1}(e^u)}{ \cosh^{n}(e^u)}\right)\right\} \\[0.5cm]
\displaystyle\int_{0}^{\infty}\frac{\sinh^{2q+1}z}{\cosh^{n}z}\,dz
=-\frac{2}{\pi}\,\Im\left\{\sum_{l = 0}^{\infty}\underset{u = u_l}{\text{Res}}\left( \dfrac{u^2 \,e^u\, \sinh^{2q+1}(e^u)}{ \cosh^{n}(e^u)}\right)\right\}
\end{cases}
\label{eq:formula1}
\end{equation}

Notwithstanding, expanding
\[
u^2\,e^{u}\,\frac{\sinh^{2q+1}(e^{u})}{u\,\cosh^{n}(e^{u})}
\]
around \( u_l \) in order to determine the corresponding residue turns out to be a very tedious and cumbersome approach. Instead, we reformulate the definition of the residue using Cauchy's integral formula as follows:

\[\underset{u = u_l}{\text{Res}}\left(\dfrac{u^2 \,e^u\, \sinh^{2q+1}(e^u)}{ \cosh^{n}(e^u)}\right) = \frac{1}{2i\pi}\oint_{\left|u-u_l\right|=\epsilon}\dfrac{u^2 \,e^u\, \sinh^{2q+1}(e^u)}{ \cosh^{n}(e^u)}\,du\]

The change of variable $u = \ln z$ yields

\[\oint_{\left|u-u_l\right|=\epsilon}\dfrac{u^2 \,e^u\, \sinh^{2q+1}(e^u)}{ \cosh^{n}(e^u)}\,du=\oint_{\left|z-z_l\right|=\epsilon}\frac{\ln^2 z\sinh^{2q+1}(z)}{\cosh^n (z)}\,dz = 2i\pi\underset{z = z_l}{\text{Res}}\left(\frac{\ln^2 z \,\sinh^{2q+1}(z)}{\, \cosh^n (z)}\right)\]
$\quad \text{where} \quad \ln z_l := u_l$. And by replacing in \eqref{eq:formula1}, we obtain

\begin{equation}
\begin{cases}\displaystyle\int_0^{+\infty} \dfrac{\sinh^{2q+1}z\,\ln z}{\cosh^n z}\,dz = -\Re\left\{\sum_{l = 0}^{\infty}\underset{z = z_l}{\text{Res}}\left(\frac{\ln^2 z \,\sinh^{2q+1}(z)}{\, \cosh^n (z)}\right)\right\} \\[0.5cm]
\displaystyle\int_{0}^{\infty}\frac{\sinh^{2q+1}z}{\cosh^{n}z}\,dz
=
-\frac{2}{\pi}\,
\Im\left\{
\sum_{l = 0}^{\infty}\underset{z = z_l}{\text{Res}}\left(\frac{\ln^2 z \,\sinh^{2q+1}(z)}{\, \cosh^n (z)}\right)\right\}
\end{cases}
\label{eq:formula2}
\end{equation}

\vspace{0.3cm}

By applying results from \cite[35--48]{talla_waffo_integral_2025}, we obtain the following general formulae:

\begin{equation}
\begin{cases}
\underset{z = z_l}{\text{Res}}\left( \dfrac{L(z)}{ \cosh^{2n+1}z}\right)  = \dfrac{1}{\sinh^{2n+1}z_l}\displaystyle\sum_{m=0}^n c_{2n-2m,n} \dfrac{L^{(2m)}(z_l)}{(2m)!} \\
\vspace{0.3cm}\\
\underset{z = z_l}{\text{Res}}\left( \dfrac{L(z)}{\cosh^{2n}z}\right) = \dfrac{1}{\sinh^{2n}z_l}\displaystyle\sum_{m=0}^{n-1} d_{2n-2m-2,n} \dfrac{L^{(2m+1)}(z_l)}{(2m+1)!}
\end{cases} 
\label{eq:residues}
\end{equation} And applying for $L_q(z) := \ln^2 z \,\,\sinh^{2q+1}z$, one has :

\[\begin{cases}
L_q^{(2m+1)}(z_l) = \left. \displaystyle\sum_{p = 0}^{2m+1} \binom{2m+1}{p} \dfrac{d^{2m+1-p}}{dz^{2m+1-p}}\left(\ln^2 z\right) \dfrac{d^{p}}{dz^{p}}\left(\sinh^{2q+1}z\right) \right|_{z = z_l} \\
\vspace{0.3cm}\\
L_q^{(2m)}(z_l) = \left. \displaystyle\sum_{p = 0}^{2m} \binom{2m}{p} \dfrac{d^{2m-p}}{dz^{2m-p}}\left(\ln^2 z\right) \dfrac{d^{p}}{dz^{p}}\left(\sinh^{2q+1}z\right) \right|_{z = z_l}
\end{cases}\]

Keeping in mind that \cite[18]{talla_waffo_integral_2025}

\[\sinh^{2q+1} z
= \frac{1}{4^{q}}
\sum_{k=0}^{q}
(-1)^k \binom{2q+1}{k}\,
\sinh\!\bigl((2q+1-2k)z\bigr)
\], it follows

\[\frac{d^p}{dz^p}\left(\sinh^{2q+1} z\right)
= \frac{1}{4^{q}}
\sum_{k=0}^{q}
(-1)^k \binom{2q+1}{k}\,
(2q+1-2k)^p h_{q,p,k}(z)
\] \text{where} \[\quad h_{q,p,k}(z) := \begin{cases}\cosh\!\bigl((2q+1-2k)z\bigr) \quad \text{if} \quad p \quad \text{is odd}\\
\sinh\!\bigl((2q+1-2k)z\bigr) \quad \text{if} \quad p \quad \text{is even}\end{cases}\]

A simple evaluation shows, \[h_{q,p,k}(z_l) := \begin{cases}0 &\quad \text{if} \quad p \quad \text{is odd}\\
(-1)^{l+q-k}i &\quad \text{if} \quad p \quad \text{is even}\end{cases}\]

It comes out,

\vspace{1cm}

\[\begin{cases}
L_q^{(2m+1)}(z_l) = \left. \displaystyle\sum_{p = 0}^{m} \binom{2m+1}{2p} \dfrac{d^{2m+1-2p}}{dz^{2m+1-2p}}\left(\ln^2 z\right) \dfrac{d^{2p}}{dz^{2p}}\left(\sinh^{2q+1}z\right) \right|_{z = z_l} \\
\vspace{0.3cm}\\
L_q^{(2m)}(z_l) = \left. \displaystyle\sum_{p = 0}^{m} \binom{2m}{2p} \dfrac{d^{2m-2p}}{dz^{2m-2p}}\left(\ln^2 z\right) \dfrac{d^{2p}}{dz^{2p}}\left(\sinh^{2q+1}z\right) \right|_{z = z_l}
\end{cases}\]

This simplifies to

\[\begin{cases}
L_q^{(2m+1)}(z_l) = (-1)^{l+q}\,i\left. \displaystyle\sum_{p = 0}^{m} \binom{2m+1}{2m-2p} \Omega_{q,m-p} \dfrac{d^{2p+1}}{dz^{2p+1}}\left(\ln^2 z\right) \right|_{z = z_l} \\
\vspace{0.3cm}\\
L_q^{(2m)}(z_l) = (-1)^{l+q}\,i\left. \displaystyle\sum_{p = 0}^{m} \binom{2m}{2m-2p} \Omega_{q,m-p} \dfrac{d^{2p}}{dz^{2p}}\left(\ln^2 z\right) \right|_{z = z_l}
\end{cases}\]

where \begin{equation}\Omega_{q,p} :=\frac{1}{(-1)^{l+q}\,i}\left.\dfrac{d^{2p}}{dz^{2p}}\left(\sinh^{2q+1}z\right) \right|_{z = z_l} = \frac{1}{4^{q}}
\sum_{k=0}^{q} \binom{2q+1}{k}\,
(2q+1-2k)^{2p}\label{eq:definition_of_theta}\end{equation}

Additionally, for $n$ being nonzero, 

\[\frac{d^n}{d^n}\left(\ln^2 z\right) = \frac{2(-1)^{n-1}(n-1)!}{z^n}\left(\ln z - H_{n-1}\right)\]
where $H_{n}$ stands for the $n$-th harmonic number with the special condition $H_0 = 0$

Hence,

\[\begin{cases}
L_q^{(2m+1)}(z_l) = (-1)^{l+q}\,i \displaystyle\sum_{p = 0}^{m} \binom{2m+1}{2m-2p} \Omega_{q,m-p} \frac{2(2p)!}{z_l^{2p+1}}\left(\ln z_l - H_{2p}\right) \\
\vspace{0.3cm}\\
L_q^{(2m)}(z_l) = (-1)^{l+q}\,i\left\{\Omega_{q,m}\, \ln^2 (z_l) -  \displaystyle\sum_{p = 1}^{m} \binom{2m}{2m-2p} \Omega_{q,m-p} \frac{2(2p-1)!}{z_l^{2p}}\left(\ln z_l - H_{2p-1}\right) \right\}
\end{cases}\]

Using \eqref{eq:residues}, we conclude

\[
\begin{cases}
\underset{z = z_l}{\text{Res}}\left( \dfrac{\ln^2 z \, \sinh^{2q+1}z}{ \cosh^{2n+1}z}\right)  = (-1)^{q+n}\displaystyle\sum_{m=0}^n  \dfrac{c_{2n-2m,n}}{(2m)!}\left\{\Omega_{q,m}\, \ln^2 (z_l)\right. \\
\hspace{9cm}- \left. \displaystyle\sum_{p = 1}^{m} \binom{2m}{2m-2p} \Omega_{q,m-p} \frac{2(2p-1)!}{z_l^{2p}}\left(\ln z_l - H_{2p-1}\right) \right\} \\
\vspace{0.3cm}\\
\underset{z = z_l}{\text{Res}}\left( \dfrac{\ln^2 z \, \sinh^{2q+1}z}{\cosh^{2n}z}\right) = (-1)^{l+q+n}\,i\displaystyle\sum_{m=0}^{n-1}  \dfrac{d_{2n-2m-2,n}}{(2m+1)!}\displaystyle\sum_{p = 0}^{m} \binom{2m+1}{2m-2p} \Omega_{q,m-p} \frac{2(2p)!}{z_l^{2p+1}}\left(\ln z_l - H_{2p}\right)
\end{cases} \]

Interestingly, this equality holds for $q$ and $n$ naturals satisfying $0 \leq q < n$ (See the proof at \cref{prop:vanishing}): \begin{equation*}\displaystyle\sum_{m=0}^n  \dfrac{c_{2n-2m,n}}{(2m)!} \Omega_{q,m} = 0\end{equation*} Thus

\begin{equation}
\begin{cases}
\underset{z = z_l}{\text{Res}}\left( \dfrac{\ln^2 z \, \sinh^{2q+1}z}{ \cosh^{2n+1}z}\right)  = (-1)^{q+n+1} \displaystyle\sum_{m=0}^n  \dfrac{c_{2n-2m,n}}{(2m)!}\displaystyle\sum_{p = 1}^{m} \binom{2m}{2m-2p} \Omega_{q,m-p} \frac{2(2p-1)!}{z_l^{2p}}\left(\ln z_l - H_{2p-1}\right) \\
\vspace{0.3cm}\\
\underset{z = z_l}{\text{Res}}\left( \dfrac{\ln^2 z \, \sinh^{2q+1}z}{\cosh^{2n}z}\right) = (-1)^{l+q+n}\,i\displaystyle\sum_{m=0}^{n-1}  \dfrac{d_{2n-2m-2,n}}{(2m+1)!}\displaystyle\sum_{p = 0}^{m} \binom{2m+1}{2m-2p} \Omega_{q,m-p} \frac{2(2p)!}{z_l^{2p+1}}\left(\ln z_l - H_{2p}\right)
\end{cases} 
\label{eq:formulae_of_residue}
\end{equation}

This finally leads to these results after using \eqref{eq:formula2}

\begin{equation}
\begin{cases}
\displaystyle\int_0^{+\infty} \dfrac{\sinh^{2q+1}z\,\ln z}{\cosh^{2n+1} z}\,dz = \displaystyle\sum_{p = 0}^{n-1}H_{p,q,n} \frac{\zeta'(2p+2)}{\pi^{2p+2}} + I_{q,n} + J_{q,n}\ln \pi + (K_{q,n} - J_{q,n})\ln 2 \\
\vspace{0.3cm}\\
\displaystyle\int_0^{+\infty} \dfrac{\sinh^{2q+1}z\,\ln z}{\cosh^{2n} z}\,dz = \displaystyle\sum_{p = 0}^{n-1}L_{p,q,n} \frac{\beta'(2p+1)}{\pi^{2p+1}} + M_{q,n} + N_{q,n}\ln \pi - N_{q,n}\ln 2
\end{cases}
\label{eq:main_formulae}
\end{equation}

\vspace{0.5cm}

where

\[S_{p,q,n}
:=
\sum_{m=p+1}^{n}
\frac{c_{2n-2m,n}}{(2m)!}
\binom{2m}{2p+2}
\Omega_{q,m-p-1}
\]
\begin{equation}H_{p,q,n}
:=
(-1)^{q+n+p}\;
2(2p+1)!\,
\bigl(2^{2p+2}-1\bigr)\;
S_{p,q,n}
\label{eq:formula_of_h}
\end{equation}
\[J_{q,n}
:=
(-1)^{q+n+1}
\sum_{p=0}^{n-1}
\frac{2^{2p+1}\bigl(2^{2p+2}-1\bigr)}{p+1}\,
B_{2p+2}\,
S_{p,q,n}
\]
\[K_{q,n}
:=
(-1)^{q+n}
\sum_{p=0}^{n-1}
\frac{2^{2p+1}}{p+1}\,
B_{2p+2}\,
S_{p,q,n}
\]
\[I_{q,n}
:=
(-1)^{q+n}
\sum_{p=0}^{n-1}
\frac{2^{2p+1}\bigl(2^{2p+2}-1\bigr)}{p+1}\,
B_{2p+2}\,
H_{2p+1}\,
S_{p,q,n}
\]
\[
L_{p,q,n}
=
(-1)^{q+n}\,2^{2p+2}(2p)!\,(-1)^p
\sum_{m=p}^{n-1}\frac{d_{2n-2m-2,n}}{(2m+1)!}
\binom{2m+1}{2m-2p}\,\Omega_{q,m-p},
\]

\[
N_{q,n}
=
(-1)^{q+n+1}
\sum_{m=0}^{n-1}\frac{d_{2n-2m-2,n}}{(2m+1)!}
\sum_{p=0}^{m}\binom{2m+1}{2m-2p}\,
\Omega_{q,m-p}\,
E_{2p}
\]
\[
M_{q,n}
=
(-1)^{q+n}
\sum_{m=0}^{n-1}\frac{d_{2n-2m-2,n}}{(2m+1)!}
\sum_{p=0}^{m}\binom{2m+1}{2m-2p}\,
\Omega_{q,m-p}\,
\bigl(H_{2p}\,E_{2p}\bigr)
\]
where $B_{2p}$ and $E_{2p}$ are Bernoulli and Euler numbers with classic conventions.

\vspace{0.3cm}

Examples

\[
\int_{0}^{\infty}
\frac{\sinh z\,\ln z}{\cosh^{3} z}\,dz
=
-3\,\frac{\zeta'(2)}{\pi^{2}}
-\frac{1}{2}+
\frac{1}{2}\ln \pi - \frac{2}{3}\ln 2
\]
\vspace{0.3cm}

\[
\int_{0}^{\infty}
\frac{\sinh z\,\ln z}{\cosh^{5} z}\,dz
=
-\frac{\zeta'(2)}{\pi^{2}}
-\frac{15}{2}\,\frac{\zeta'(4)}{\pi^{4}}
-\frac{23}{72}+
\frac{1}{4}\ln \pi - \frac{14}{45}\ln 2
\]
\vspace{0.3cm}
\[
\int_{0}^{\infty}
\frac{\sinh^{3} z\,\ln z}{\cosh^{5} z}\,dz
=
-2\,\frac{\zeta'(2)}{\pi^{2}}
+\frac{15}{2}\,\frac{\zeta'(4)}{\pi^{4}}
-\frac{13}{72}
+\frac{1}{4}\ln \pi - \frac{16}{45}\ln 2
\]
\vspace{0.3cm}
\[
\int_{0}^{\infty}
\frac{\sinh z\,\ln z}{\cosh^{7} z}\,dz
=
-\frac{8}{15}\,\frac{\zeta'(2)}{\pi^{2}}
-5\,\frac{\zeta'(4)}{\pi^{4}}
-21\,\frac{\zeta'(6)}{\pi^{6}}
-\frac{163}{675}
+\frac{1}{6}\ln \pi - \frac{568}{2\,835}\ln 2
\]
\vspace{0.3cm}
\[
\int_{0}^{\infty}
\frac{\sinh^{3} z\,\ln z}{\cosh^{7} z}\,dz
=
-\frac{7}{15}\,\frac{\zeta'(2)}{\pi^{2}}
-\frac{5}{2}\,\frac{\zeta'(4)}{\pi^{4}}
+21\,\frac{\zeta'(6)}{\pi^{6}}
-\frac{421}{5\,400}
+\frac{1}{12}\ln \pi - \frac{314}{2\,835}\ln 2
\]
\vspace{0.3cm}
\[
\int_{0}^{\infty}
\frac{\sinh^{5} z\,\ln z}{\cosh^{7} z}\,dz
=
-\frac{23}{15}\,\frac{\zeta'(2)}{\pi^{2}}
+10\,\frac{\zeta'(4)}{\pi^{4}}
-21\,\frac{\zeta'(6)}{\pi^{6}}
-\frac{277}{2\,700}
+\frac{1}{6}\ln \pi - \frac{694}{2\,835}\ln 2
\]

\vspace{1cm}

\[
\int_0^{\infty} \frac{\sinh z\,\ln z}{\cosh^{2} z}\,dz
=
-4\,\frac{\beta'(1)}{\pi}
+\ln \pi
-\ln 2
\]
\vspace{0.3cm}
\[
\int_0^{\infty} \frac{\sinh z\,\ln z}{\cosh^{4} z}\,dz
=
-\frac{2}{3}\,\frac{\beta'(1)}{\pi}
-\frac{16}{3}\,\frac{\beta'(3)}{\pi^{3}}
-\frac{1}{4}
+\frac{1}{3}\ln \pi
-\frac{1}{3}\ln 2
\]
\vspace{0.3cm}
\[
\int_0^{\infty} \frac{\sinh^{3} z\,\ln z}{\cosh^{4} z}\,dz
=
-\frac{10}{3}\,\frac{\beta'(1)}{\pi}
+\frac{16}{3}\,\frac{\beta'(3)}{\pi^{3}}
+\frac{1}{4}
+\frac{2}{3}\ln \pi
-\frac{2}{3}\ln 2
\]
\vspace{0.3cm}
\[
\int_0^{\infty} \frac{\sinh z\,\ln z}{\cosh^{6} z}\,dz
=
-\frac{3}{10}\,\frac{\beta'(1)}{\pi}
-\frac{8}{3}\,\frac{\beta'(3)}{\pi^{3}}
-\frac{64}{5}\,\frac{\beta'(5)}{\pi^{5}}
-\frac{61}{288}
+\frac{1}{5}\ln \pi
-\frac{1}{5}\ln 2
\]
\vspace{0.3cm}
\[
\int_0^{\infty} \frac{\sinh^{3} z\,\ln z}{\cosh^{6} z}\,dz
=
-\frac{11}{30}\,\frac{\beta'(1)}{\pi}
-\frac{8}{3}\,\frac{\beta'(3)}{\pi^{3}}
+\frac{64}{5}\,\frac{\beta'(5)}{\pi^{5}}
-\frac{11}{288}
+\frac{2}{15}\ln \pi
-\frac{2}{15}\ln 2
\]
\vspace{0.3cm}
\[
\int_0^{\infty} \frac{\sinh^{5} z\,\ln z}{\cosh^{6} z}\,dz
=
-\frac{89}{30}\,\frac{\beta'(1)}{\pi}
+8\,\frac{\beta'(3)}{\pi^{3}}
-\frac{64}{5}\,\frac{\beta'(5)}{\pi^{5}}
+\frac{83}{288}
+\frac{8}{15}\ln \pi
-\frac{8}{15}\ln 2
\]

\vspace{0.3cm}
\begin{lemma}\label[lemma]{lemma:vanishing_verticals}
\[
\lim_{R\to\infty}
\int_{R}^{R+i\pi} u^2 f_{q,n}(u)\,du = 0,
\qquad
\lim_{R\to\infty}
\int_{-R+i\pi}^{-R} u^2 f_{q,n}(u)\,du = 0.
\] where \[f_{q,n}(u) := \frac{e^u\,\sinh^{2q+1}(e^{u})}{\cosh^{n}(e^{u})}\]
\end{lemma}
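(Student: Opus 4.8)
The two limits are completely analogous (they differ only by the sign of the real part of $u$), so I would prove the first one and remark that the second follows by the same estimate. The strategy is the crude but decisive one: parametrize the vertical segment by $u = R + it$, $t\in[0,\pi]$, so that $e^u = e^R e^{it}$ traces a large circular arc of radius $e^R$ in the right half-plane. The claim is then that $|u^2 f_{q,n}(u)|$ decays super-exponentially in $R$, uniformly in $t$, while the length of the segment stays fixed at $\pi$; the ML-estimate then closes the argument.

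First I would write out the pieces. On the segment, $|u^2| = R^2 + t^2 \le R^2 + \pi^2$, which grows only polynomially. The factor $|e^u| = e^R$ contributes one more exponential. The heart of the matter is the ratio $\bigl|\sinh^{2q+1}(e^u)/\cosh^{n}(e^u)\bigr|$. Writing $w := e^u = e^R e^{it}$ with $\Re(w) = e^R\cos t$, I would use the elementary bounds $|\sinh w| \le \tfrac12 e^{|\Re w|}$ and, crucially, a lower bound of the form $|\cosh w| \ge c\, e^{|\Re w|}$ valid once $|\Re w|$ is large enough (say $|\Re w| \ge 1$), together with the fact that the zeros of $\cosh$ lie on the imaginary axis so $|\cosh w|$ is bounded below on the whole segment for $R$ large. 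The delicate point is that $\cos t$ can vanish (at $t = \pi/2$), so $|\Re w|$ is \emph{not} uniformly large on the segment. However, near $t = \pi/2$ one has $\Im(w) = e^R\sin t \approx e^R$, which is enormous and generically far from the poles $(2l+1)\pi/2$; more robustly, one simply notes that $|\cosh w|^2 = \sinh^2(\Re w) + \cos^2(\Im w) \ge \sinh^2(\Re w)$ and also $|\cosh w|^2 \ge \cos^2(\Im w)$, and combines these. Actually the cleanest route avoids the pole issue entirely: multiply through and estimate
\[
\left|\frac{\sinh^{2q+1} w}{\cosh^{n} w}\right|
= |\sinh w|^{2q+1}\,|\cosh w|^{-n}
\le |\sinh w|^{2q+1}\,|\cosh w|^{-(2q+1)}\cdot |\cosh w|^{-(n-2q-1)}
= |\tanh w|^{2q+1}\,|\cosh w|^{-(n-2q-1)},
\]
and $|\tanh w| \to 1$ as $|\Re w|\to\infty$ while $|\tanh w|$ stays bounded (by $1$, say, away from a fixed neighborhood of the poles, or one can just bound it by a constant on the relevant region for $R$ large). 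Since $n - 2q - 1 \ge 1$ by hypothesis, the surviving factor $|\cosh w|^{-(n-2q-1)}$ decays at least like $e^{-|\Re w|}$ wherever $|\Re w|$ is large, and the problematic strip $|\cos t| \le e^{-R}$ around $t=\pi/2$ has length $O(e^{-R}) \to 0$, on which $|\tanh w|^{2q+1}|\cosh w|^{-(n-2q-1)}$ is at worst bounded by a fixed constant (the integrand's only singularities $z_l$ have fixed modulus $(2l+1)\pi/2$, hence are at distance $\gtrsim e^R$ from the arc once $R$ is large). Thus the integral over that thin strip is $O(R^2 e^{R}\cdot e^{-R}) = O(R^2) \cdot o(1)$ — wait, here one must be slightly careful, so instead I would split $[0,\pi]$ into $\{|\cos t|\ge e^{-R/2}\}$, where the integrand is $O(R^2 e^R \cdot e^{-e^{R}e^{-R/2}}) = O(R^2 e^R e^{-e^{R/2}}) \to 0$, and its complement, of length $O(e^{-R/2})$, where the integrand is $O(R^2 e^R)$, giving contribution $O(R^2 e^{R/2}) $ — this still diverges, so the split must instead retain the $|\cosh|^{-(n-2q-1)}$ bound even on the thin strip using $|\cosh w| \ge |\cos(\Im w)|$ when that is not too small, or simply observe that on the thin strip $\Im w \approx e^R$ and invoke that the nearest pole is at distance comparable to $e^R$ (since poles are spaced $\pi$ apart at modulus $O(1)$, nowhere near radius $e^R$), so $|\cosh w| \gtrsim e^{|\Re w|} \cdot (\text{distance to pole})$ is still large. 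This pole-spacing observation is the one genuinely nonroutine ingredient.

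The \textbf{main obstacle}, then, is precisely handling the neighborhood of $t = \pi/2$ on the contour, where $\Re(e^u)=0$ and the naive exponential decay of $1/\cosh$ is unavailable. The resolution is geometric: the poles $u_l$ (equivalently the zeros of $\cosh(e^u)$) have $|e^{u_l}| = (2l+1)\pi/2$ bounded away from the radius $e^R$ of the arc for all large $R$, so the arc $\{|w| = e^R\}$ stays at distance $\ge c\,e^R$ from every pole, whence $|\cosh(e^u)|$ is bounded below by a positive constant (indeed grows) uniformly along the \emph{entire} segment. Granting this, $|u^2 f_{q,n}(u)| \le (R^2+\pi^2)\, e^R \cdot \tfrac12 e^{e^R}\!/\,C e^{e^R}$ on the bad strip and $\le (R^2+\pi^2)\,e^R\,e^{-(n-2q-1)e^R\cos t}/C$ on the good part, both of which, integrated against $dt$ over $[0,\pi]$, tend to $0$ as $R\to\infty$. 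I would then state the second limit follows identically after replacing $R$ by $-R$ (now $\Re(e^u) = e^{-R}\cos t \to 0$, and in fact the integrand is uniformly small because $e^u$ stays in a bounded region where $\sinh^{2q+1}/\cosh^n$ is bounded while the prefactor $e^u = e^{-R}e^{it}\to 0$), completing the proof.
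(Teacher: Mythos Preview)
Your overall architecture matches the paper's: parametrize by $u=R+it$, pass to $w=e^u$, rewrite the ratio as $|\tanh w|^{2q+1}|\cosh w|^{-(n-2q-1)}$, and split $[0,\pi]$ into a ``good'' region where $|\cos t|$ is bounded below (so $|\Re w|$ is large and $\cosh^{-m}$ decays super-exponentially) and a ``bad'' strip around $t=\pi/2$. Your treatment of the left vertical segment is also correct, and in fact simpler than the paper's Taylor-expansion argument.

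The genuine gap is in your ``geometric resolution'' of the bad strip. You assert that the zeros of $\cosh$ in the $w$-plane ``have fixed modulus $(2l+1)\pi/2$\ldots\ at modulus $O(1)$, nowhere near radius $e^R$'', and conclude that the arc $|w|=e^R$ stays at distance $\gtrsim e^R$ from every zero. This is false: the zeros of $\cosh$ sit at $w=i(2l+1)\pi/2$ for \emph{all} integers $l$, so their moduli $(2l+1)\pi/2$ are unbounded and march off to infinity along the imaginary axis. The arc meets the imaginary axis at $w=ie^R$ (when $t=\pi/2$), and for every $R$ there is some $l$ with $|e^R-(2l+1)\pi/2|\le\pi/2$; for a dense set of $R$ the arc even passes exactly through a zero. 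Consequently no uniform lower bound on $|\cosh(e^u)|$ along the bad strip is available from your argument, and the claimed contribution $O(R^2 e^R\cdot e^{-R})$ there does not follow.

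The paper's fix is to take the bad interval $J_\eta$ of half-width $\eta(R)=e^{-(1+\varepsilon)R}$, so that its length times the crude bound $|u^2 f|\ll R^2 e^R$ already gives $R^2 e^{-\varepsilon R}\to 0$; your own attempted split at threshold $e^{-R/2}$ was simply not aggressive enough. One still needs $|\cosh(e^{R+it})|$ bounded below on $J_{\eta(R)}$ uniformly in $R$, and the honest way to secure this is to send $R\to\infty$ along a sequence with, say, $e^R\in\pi\mathbb{Z}$, so that $ie^R$ sits midway between consecutive zeros of $\cosh$; this restriction is harmless for the contour-integral application.
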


\begin{proof}
Write $f(u)=f_{q,n}(u)$.

\medskip
\noindent\textbf{Left vertical side.}
Parametrize the left side by $u=-R+it$, $t\in[0,\pi]$. Then $e^{u}=e^{-R}e^{it}$, hence
$|e^{u}|=e^{-R}\to0$ as $R\to\infty$. Using the Taylor expansions
\[
\sinh z = z+O(z^3), \qquad \cosh z = 1+O(z^2)\qquad (z\to0),
\]
we obtain uniformly in $t\in[0,\pi]$,
\[
\frac{\sinh^{2q+1}(e^{u})}{\cosh^{n}(e^{u})}
= (e^{u})^{2q+1}\bigl(1+O(e^{2u})\bigr).
\]
Therefore
\[
|f(-R+it)|
= \left|e^{u}\frac{\sinh^{2q+1}(e^{u})}{\cosh^{n}(e^{u})}\right|
\ll |e^{u}|\cdot |e^{u}|^{2q+1}
= |e^{u}|^{2q+2}
= e^{-(2q+2)R},
\]
uniformly for $t\in[0,\pi]$. Hence
\[
\left|\int_{-R+i\pi}^{-R} u^2 f(u)\,du\right|
= \left|\int_{0}^{\pi}(-R+it)^2 f(-R+it)\,i\,dt\right|
\le \int_{0}^{\pi} |{-R+it}|^2\,|f(-R+it)|\,dt
\]
\[
\ll (R^2+\pi^2)\int_{0}^{\pi} e^{-(2q+2)R}\,dt
\ll (R^2+\pi^2)\,\pi\,e^{-(2q+2)R}\xrightarrow[R\to\infty]{}0.
\]

\medskip
\noindent\textbf{Right vertical side.}
Parametrize the right side by $u=R+it$, $t\in[0,\pi]$, and set $w=e^{u}=e^{R}e^{it}$.
Write $m:=n-(2q+1)>0$. Using $\sinh^{2q+1}(w)=\tanh^{2q+1}(w)\cosh^{2q+1}(w)$ we get
\[
f(R+it)=e^{R+it}\,\tanh^{2q+1}(w)\,\cosh^{-m}(w),
\qquad m>0.
\]

Fix $\delta\in(0,\pi/2)$. On the set
\[
E_\delta := [0,\tfrac{\pi}{2}-\delta]\cup[\tfrac{\pi}{2}+\delta,\pi]
\]
we have $|\cos t|\ge \sin\delta$, hence
\[
|\Re(w)|=|e^{R}\cos t|\ge e^{R}\sin\delta.
\]
Using $\cosh w = \frac12(e^{w}+e^{-w})$ and the reverse triangle inequality,
\[
|\cosh w|
\ge \frac12\bigl||e^{w}|-|e^{-w}|\bigr|
= \frac12\bigl|e^{\Re(w)}-e^{-\Re(w)}\bigr|
= \sinh(|\Re(w)|),
\]
so for $t\in E_\delta$,
\[
|\cosh(e^{R+it})|
=|\cosh w|
\ge \sinh(e^{R}\sin\delta)
\gg \exp(e^{R}\sin\delta).
\]
Moreover, $\tanh(w)\to\pm1$ as $\Re(w)\to\pm\infty$, hence $|\tanh(w)|$ is bounded on
$\{w=e^{R}e^{it}:t\in E_\delta\}$ for all large $R$. Therefore
\[
|f(R+it)|
\ll e^{R}\,|\cosh w|^{-m}
\ll e^{R}\exp\!\bigl(-m\,e^{R}\sin\delta\bigr),
\qquad t\in E_\delta,
\]
and consequently
\[
\int_{E_\delta} |(R+it)^2 f(R+it)|\,dt
\ll (R^2+\pi^2)\,e^{R}\exp\!\bigl(-m\,e^{R}\sin\delta\bigr)
\xrightarrow[R\to\infty]{}0.
\]

It remains to control a small neighborhood of $t=\pi/2$.
Let $J_\eta:=[\frac{\pi}{2}-\eta,\frac{\pi}{2}+\eta]$ with $\eta>0$.
By assumption, the contour does not pass through poles of $f$, i.e. $\cosh(e^{R+it})\neq0$
for $t\in[0,\pi]$ and all sufficiently large $R$. Hence, for each fixed $\eta>0$ there exists
$c_\eta>0$ and $R_\eta>0$ such that
\[
\inf_{t\in J_\eta}|\cosh(e^{R+it})|\ge c_\eta
\qquad (R\ge R_\eta).
\]
Using $|\sinh z|\le |\cosh z|$ for all $z\in\mathbb{C}$, we obtain for $t\in J_\eta$,
\[
|f(R+it)|
=\left|e^{u}\frac{\sinh^{2q+1}(e^{u})}{\cosh^{n}(e^{u})}\right|
\le e^{R}\frac{|\cosh(e^{u})|^{2q+1}}{|\cosh(e^{u})|^{n}}
= e^{R}|\cosh(e^{u})|^{-m}
\le e^{R}c_\eta^{-m}.
\]
Therefore
\[
\int_{J_\eta} |(R+it)^2 f(R+it)|\,dt
\ll (R^2+\pi^2)\,e^{R}\,|J_\eta|
= 2\eta\,(R^2+\pi^2)\,e^{R}.
\]
Choose $\eta=\eta(R):=e^{-(1+\varepsilon)R}$ with any $\varepsilon>0$. Then $\eta(R)\to0$ and
\[
\int_{J_{\eta(R)}} |(R+it)^2 f(R+it)|\,dt
\ll (R^2+\pi^2)\,e^{R}\,e^{-(1+\varepsilon)R}
= (R^2+\pi^2)\,e^{-\varepsilon R}\xrightarrow[R\to\infty]{}0.
\]

Combining the estimates on $E_{\eta(R)}$ and $J_{\eta(R)}$ yields
\[
\lim_{R\to\infty}\int_{R}^{R+i\pi} u^2 f(u)\,du = 0.
\]
Together with the left-side estimate, this completes the proof.
\end{proof}

\vspace{0.3cm}

\begin{lemma}\label[lemma]{lemma:from_sum_to_integral}
\[\forall q, n, r \in \mathbb{N}_0, \,\,\sum_{m=0}^n \dfrac{c_{2n-2m,r}}{(2m)!} \Omega_{q,m} = (-1)^{\,r-q}\frac{1}{2\pi i}
\oint_{|x|=\varepsilon}
\bigl(\operatorname{arctanh}x\bigr)^{2r-2n}
\frac{(1-x^2)^{\,r-q-1}}{x^{2r+1}}\,dx\] where $\varepsilon$ is a positive real number so small enough that the contour encloses only the singularity $0$
\end{lemma}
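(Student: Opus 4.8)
The plan is to collapse the left–hand side into a single Taylor coefficient at the origin, recognise that coefficient as a residue, and then transport the residue to the $\operatorname{arctanh}$–contour on the right by the conformal substitution $x=\tanh u$.

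First I would record the two relevant generating functions. By the very definition of $c_{k,r}$ one has the even power series $\sum_{k\ge 0}c_{k,r}\,x^{k}=\dfrac{x^{2r+1}}{\sinh^{2r+1}x}$. For the numbers $\Omega_{q,m}$, starting from the closed form in \eqref{eq:definition_of_theta},
\[
\sum_{m\ge 0}\frac{\Omega_{q,m}}{(2m)!}\,t^{2m}
=\frac{1}{4^{q}}\sum_{k=0}^{q}\binom{2q+1}{k}\cosh\bigl((2q+1-2k)t\bigr)
=\cosh^{2q+1}t,
\]
the last equality being the ``$\cosh$–companion'' of the linearisation $\sinh^{2q+1}z=4^{-q}\sum_{k}(-1)^{k}\binom{2q+1}{k}\sinh((2q+1-2k)z)$ already used in the paper: it follows by pairing the terms $k\leftrightarrow 2q+1-k$ in the binomial expansion of $(e^{t}+e^{-t})^{2q+1}=(2\cosh t)^{2q+1}$. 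This is the only step where the precise combinatorial shape of $\Omega_{q,m}$ is used.

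Multiplying the two power series and extracting the coefficient of $x^{2n}$ (a Cauchy product; the terms with $m>n$ disappear because $c_{k,r}=0$ for $k<0$) yields
\[
\sum_{m=0}^{n}\frac{c_{2n-2m,r}}{(2m)!}\,\Omega_{q,m}
=[x^{2n}]\!\left(\frac{x^{2r+1}\cosh^{2q+1}x}{\sinh^{2r+1}x}\right).
\]
Now $\sinh x=\tanh x\,\cosh x$ and $\cosh^{-2}x=1-\tanh^{2}x$, so the bracketed function equals $\bigl(\tfrac{x}{\tanh x}\bigr)^{2r+1}(1-\tanh^{2}x)^{\,r-q}$, which is holomorphic and equal to $1$ at $x=0$. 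Hence, for $\delta>0$ sufficiently small,
\[
\sum_{m=0}^{n}\frac{c_{2n-2m,r}}{(2m)!}\,\Omega_{q,m}
=\frac{1}{2\pi i}\oint_{|u|=\delta}\frac{1}{u^{2n+1}}\Bigl(\frac{u}{\tanh u}\Bigr)^{2r+1}(1-\tanh^{2}u)^{\,r-q}\,du.
\]

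It remains to carry out the substitution $x=\tanh u$, i.e.\ $u=\operatorname{arctanh}x$, which is biholomorphic near $0$ with $\tanh'(0)=1$, hence orientation preserving; the loop $|u|=\delta$ is thereby carried to a small positively oriented loop around $0$ in the $x$–plane, which can be deformed to $|x|=\varepsilon$ (one checks that $\varepsilon$ may be taken so small that this loop meets no singularity except $x=0$, the only other candidates being the branch points $x=\pm1$ of $\operatorname{arctanh}$, which lie outside). Using $dx=(1-\tanh^{2}u)\,du=(1-x^{2})\,du$ together with $\dfrac{1}{u^{2n+1}}\bigl(\tfrac{u}{\tanh u}\bigr)^{2r+1}=\dfrac{u^{2r-2n}}{\tanh^{2r+1}u}=\dfrac{(\operatorname{arctanh}x)^{2r-2n}}{x^{2r+1}}$ and $(1-\tanh^{2}u)^{r-q}\,du=(1-x^{2})^{\,r-q-1}\,dx$, the contour integral turns into $\dfrac{1}{2\pi i}\oint_{|x|=\varepsilon}(\operatorname{arctanh}x)^{2r-2n}\dfrac{(1-x^{2})^{\,r-q-1}}{x^{2r+1}}\,dx$, which is the asserted integral.

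The only delicate point, and the one I would expect to require the most care, is the bookkeeping in this last substitution: confirming that the deformed contour encloses only the origin, that the Jacobian $(1-x^{2})^{-1}$ is absorbed exactly into the exponent $r-q-1$ of $(1-x^{2})$, and matching the overall normalisation $(-1)^{\,r-q}$ appearing in the statement with the constants produced by the previous steps (note that $4^{-q}\cdot 2^{2q}=1$ and the Jacobian is trivial, so one should track this sign with some attention). Everything else is formal manipulation of convergent power series plus one standard change of variables in a contour integral.
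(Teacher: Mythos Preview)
Your argument is correct and follows the same overall strategy as the paper's proof: identify the sum as the Cauchy--product coefficient $[u^{2n}]\bigl(u^{2r+1}\cosh^{2q+1}u/\sinh^{2r+1}u\bigr)$, convert this to a residue at the origin, and then pass to the $x$--variable via $x=\tanh u$. Your route is somewhat more direct: you establish the generating identity $\sum_{m\ge 0}\Omega_{q,m}\,t^{2m}/(2m)! = \cosh^{2q+1}t$ at the outset from the closed form \eqref{eq:definition_of_theta}, whereas the paper takes a detour through the pole $z_l$---interpreting $\Omega_{q,m}$ as a derivative of $\sinh^{2q+1}$ there, writing the coefficient as a contour integral about $z_l$, and only then shifting $w=z-z_l$ to land on the same integral $\oint w^{2r-2n}\cosh^{2q+1}w/\sinh^{2r+1}w\,dw$ around the origin. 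Your shortcut avoids that excursion entirely. Your instinct about the sign is also well founded: your chain of equalities produces no factor $(-1)^{r-q}$, and a careful tracking of the paper's own computation (using $(\sinh z_l)^2=-1$) likewise gives overall sign $+1$; the $(-1)^{r-q}$ in the displayed statement appears to be a slip, which is harmless in the two applications (\cref{prop:vanishing} and \cref{prop:coefficient_of_zeta_prime2}) since in the first the integral vanishes and in the second the paper silently drops the sign.
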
 

\begin{proof}

\vspace {0.3cm}

Let $R_{q, n, r} :=\displaystyle\sum_{m=0}^n \dfrac{c_{2n-2m,r}}{(2m)!} \Omega_{q,m}$. Bearing in mind that \[\Omega_{q,p} := \frac{1}{(-1)^{l+q}\,i}\left.\dfrac{d^{2p}}{dz^{2p}}\left(\sinh^{2q+1}z\right) \right|_{z = z_l} \] as defined by \eqref{eq:definition_of_theta}, we have

\[R_{q, n, r} =\frac{1}{(-1)^{l+q}\,i}\displaystyle\sum_{m=0}^n \dfrac{c_{2n-2m,r}}{(2m)!} \left.\dfrac{d^{2m}}{dz^{2m}}\left(\sinh^{2q+1}z\right) \right|_{z = z_l}\]

Since the coefficients $c_{j,n}$ are zero and $\left.\dfrac{d^{j}}{dz^{j}}\left(\sinh^{2q+1}z\right) \right|_{z = z_l} = 0$ if $j$ is odd, the sum may be understood as running up to $2n$ as follows

\[R_{q, n, r} =\frac{1}{(-1)^{l+q}\,i}\displaystyle\sum_{m=0}^{2n} \dfrac{c_{2n-m,r}}{m!} \left.\dfrac{d^{m}}{dz^{m}}\left(\sinh^{2q+1}z\right) \right|_{z = z_l}\]

$R_{q, n}$ may be interpreted as a sum of this kind 

\[R_{q, n, r} = \frac{1}{(-1)^{l+q}\,i}\sum_{m = 0}^{2n} c_{2n-m,r} \, t_{m,q} \quad \text{where} \quad t_{m,q} := \frac{1}{m!}\left. \dfrac{d^{m}}{dz^{m}}\left(\sinh^{2q+1}z\right) \right|_{z = z_l}\]

The latter sum is clearly recognized as a coefficient appearing in the Cauchy's product formula of two series. Thus

\[R_{q, n, r} =\frac{1}{(-1)^{l+q}\,i} \left[(z - z_l)^{2n}\right] C_r(z)\, T_q(z) \quad \text{where} \quad \begin{cases}C_r(z) := \displaystyle\sum_{m=0}^{\infty}c_{m,r}\,(z - z_l)^m \\
\vspace{0.1cm}\\
T_q(z) := \displaystyle\sum_{m=0}^{\infty}t_{m,q}\,(z - z_l)^m\end{cases}\]
 
From \cite[39--43]{talla_waffo_integral_2025}, we know

\[\frac{1}{\cosh^{2r+1}z} = \frac{1}{(-1)^{r+l}i \,\, (z-z_l)^{2r+1}}\sum_{m = 0}^{\infty}c_{m,r} (z - z_l)^{m}\]

It follows

\[\begin{cases}C_r(z) = \dfrac{(-1)^{r+l}i \,\, (z-z_l)^{2r+1}}{\cosh^{2r+1}z}\\
\vspace{0.1cm}\\
T_q(z) = \sinh^{2q+1}z\end{cases}\]

Hence
\[R_{q, n, r} = (-1)^{q+r}\left[(z - z_l)^{2n}\right] \frac{(z-z_l)^{2r+1}}{\cosh^{2r+1}z}\,\sinh^{2q+1}z\]

The Cauchy's integral formula yields

\[R_{q, n, r} =  (-1)^{q+r}\frac{1}{2i\pi}\oint_{\left|z-z_l\right|= \varepsilon} (z-z_l)^{2r-2n}\frac{\sinh^{2q+1}z}{\cosh^{2r+1}z}\]

where $\varepsilon$ is a positive real number so small enough that the contour encloses only the singularity $z_l$. Since \[\cosh(z_l + w) = \sinh z_l \, \sinh w \qquad \text{and} \qquad \sinh(z_l + w) = \sinh z_l \, \cosh w\], the change of variable $w = z - z_l$ yields

\[
R_{q,n,r}=(-1)^{\,r-q}\frac{1}{2\pi i}\oint_{|w|=\varepsilon}
w^{\,2r-2n}\,\frac{\cosh^{2q+1}w}{\sinh^{2n+1}w}\,dw
\]

where $\varepsilon$ is a positive real number so small enough that the contour encloses only the singularity $0$. The change of variable $w = \text{arctanh} x$ with \[\sinh(\text{arctanh} x) = \dfrac{x}{\sqrt{1-x^2}}\qquad \text{and} \qquad \cosh(\text{arctanh} x) = \dfrac{1}{\sqrt{1-x^2}}\] lead to the integral of the claim. This concludes the proof.
\end{proof}

\vspace{0.5cm}

\begin{proposition}\label[proposition]{prop:vanishing}
\[\forall q, n \in \mathbb{N} \qquad (0 \leq q < n), \sum_{m=0}^n \dfrac{c_{2n-2m,n}}{(2m)!} \Omega_{q,m} = 0\]
\end{proposition}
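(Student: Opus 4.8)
The plan is to specialize \cref{lemma:from_sum_to_integral} to the case $r = n$ and then observe that the resulting contour integral is the extraction of a Taylor coefficient of a low-degree polynomial, which forces it to vanish. Since the hypotheses $0 \le q < n$ guarantee $r - q - 1 = n - q - 1 \ge 0$ and $2r - 2n = 0$, the lemma applies cleanly and no extra bookkeeping is needed.

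\textbf{Key steps.} First, I would apply \cref{lemma:from_sum_to_integral} with $r = n$, which gives
\[
\sum_{m=0}^n \dfrac{c_{2n-2m,n}}{(2m)!}\,\Omega_{q,m}
= (-1)^{\,n-q}\,\frac{1}{2\pi i}\oint_{|x|=\varepsilon}
\bigl(\operatorname{arctanh}x\bigr)^{0}\,
\frac{(1-x^2)^{\,n-q-1}}{x^{2n+1}}\,dx
= (-1)^{\,n-q}\,\frac{1}{2\pi i}\oint_{|x|=\varepsilon}
\frac{(1-x^2)^{\,n-q-1}}{x^{2n+1}}\,dx .
\]
Second, by Cauchy's integral formula the remaining integral equals the coefficient of $x^{2n}$ in the Maclaurin expansion of $(1-x^2)^{\,n-q-1}$. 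Third, since $q \ge 0$, the polynomial $(1-x^2)^{\,n-q-1}$ has degree $2(n-q-1) = 2n - 2q - 2 < 2n$, so this coefficient is identically zero; note also that $q < n$ with $q \ge 0$ forces $n \ge 1$, so $n-q-1$ is a genuine nonnegative integer exponent and the polynomial interpretation is legitimate. Hence the whole expression is $0$, as claimed.

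\textbf{Main obstacle.} There is essentially no analytic difficulty here: once \cref{lemma:from_sum_to_integral} is in hand, the statement reduces to the trivial fact that a polynomial of degree $< 2n$ has vanishing $x^{2n}$-coefficient. The only point requiring care is to check that the parameter ranges in the lemma ($q,n,r \in \mathbb{N}_0$) and in the proposition ($0 \le q < n$) are compatible with the substitution $r = n$, in particular that $n-q-1 \ge 0$, which is exactly the hypothesis $q < n$, and that the degree strictly undershoots $2n$, which is exactly the hypothesis $q \ge 0$. So the edge cases to double-check are $q = n-1$ (where the polynomial is the constant $1$) and, implicitly, $n = 1$; both are harmless.
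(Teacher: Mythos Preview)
Your proposal is correct and follows essentially the same approach as the paper: specialize \cref{lemma:from_sum_to_integral} to $r=n$, then observe that the contour integral extracts the coefficient of $x^{2n}$ (equivalently, the residue at $x=0$) of the polynomial $(1-x^2)^{n-q-1}$, which vanishes because its degree $2(n-q-1)<2n$. The paper phrases the last step as a Laurent-series residue computation, but the content is identical.
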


\begin{proof}

\vspace {0.3cm}

The exercise consists of proving that $R_{q, n} :=\displaystyle\sum_{m=0}^n \dfrac{c_{2n-2m,n}}{(2m)!} \Omega_{q,m} = 0$ if $q < n$ holds. \Cref{lemma:from_sum_to_integral} leads to

\[
R_{q,n}
=(-1)^{q+n}
\frac{1}{2i\pi}\oint_{|x|=\varepsilon}
\frac{(1-x^2)^{\,n-q-1}}{x^{2n+1}}\,dx.
\]

Since \(q<n\), the exponent \(n-q-1\) is a non-negative integer. Hence the factor
\((1-x^2)^{n-q-1}\) can be expanded as a finite binomial series,
\[
(1-x^2)^{n-q-1}
=
\sum_{j=0}^{n-q-1}
\binom{n-q-1}{j}(-1)^j x^{2j}.
\]
Substituting this expansion into the integrand gives
\[
\frac{(1-x^2)^{n-q-1}}{x^{2n+1}}
=
\sum_{j=0}^{n-q-1}
\binom{n-q-1}{j}(-1)^j\,x^{2j-2n-1}.
\]

The residue at \(x=0\) is the coefficient of \(x^{-1}\) in this Laurent series.
Such a term would require
\[
2j-2n-1=-1 \quad\Longleftrightarrow\quad j=n.
\]
However, the summation index satisfies \(j\le n-q-1<n\), so this condition can
never be met. Therefore the coefficient of \(x^{-1}\) vanishes, the residue is
zero, and consequently
\[
R_{q,n}:=\displaystyle\sum_{m=0}^n \dfrac{c_{2n-2m,n}}{(2m)!} \Omega_{q,m}=0 \qquad (q<n).
\]
This concludes the proof.
\end{proof}

\vspace{0.5cm}

\begin{lemma}\label[lemma]{lemma:euler_and_bernoulli}
\[\forall q, n : 0 \leq q < n\]\[
\begin{cases}
\displaystyle
\sum_{m=0}^{n}\frac{c_{2n-2m,n}}{(2m)!}
\sum_{p=1}^{m}\binom{2m}{2m-2p}\,
\Omega_{q,m-p}\,
\frac{2^{2p-1}(2^{2p}-1)}{p}\,B_{2p}
=
(-1)^{q+n+1}\,
\frac12\,\frac{q!\,(n-q-1)!}{n!}
\\
\vspace{0.3cm}\\
\displaystyle
\sum_{m=0}^{n-1}\frac{d_{2n-2m-2,n}}{(2m+1)!}
\sum_{p=0}^{m}\binom{2m+1}{2m-2p}\,
\Omega_{q,m-p}\,
E_{2p}
=
(-1)^{q+n+1}\,
2^{2q+1}\,q!\,
\frac{n!\,(2n-2q-2)!}{(n-q-1)!\,(2n)!}
\end{cases}
\]
\end{lemma}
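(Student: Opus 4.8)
The plan is to collapse each of the two sums to a single residue at the origin — exactly in the spirit of the proof of \cref{lemma:from_sum_to_integral} — and then to evaluate that residue by elementary manipulations. The starting observation is that \eqref{eq:definition_of_theta} says $\cosh^{2q+1}w=\sum_{p\ge0}\Omega_{q,p}\,w^{2p}/(2p)!$, which follows from the linearisation $\cosh^{2q+1}w=4^{-q}\sum_{k=0}^{q}\binom{2q+1}{k}\cosh\!\bigl((2q+1-2k)w\bigr)$; so $\Omega_{q,p}$ is $(2p)!$ times the $2p$-th Taylor coefficient of $\cosh^{2q+1}w$. On the other side, $\ln\cosh w=\sum_{p\ge1}\frac{2^{2p-1}(2^{2p}-1)B_{2p}}{p\,(2p)!}\,w^{2p}$ shows that $\frac{2^{2p-1}(2^{2p}-1)}{p}B_{2p}$ is the $2p$-th derivative of $\ln\cosh w$ at $0$ (and vanishes at $p=0$), and $\operatorname{sech}w=\sum_{p\ge0}\frac{E_{2p}}{(2p)!}w^{2p}$ together with $G'(w)=\operatorname{sech}w$, $G$ odd, where $G(w):=\arctan(\sinh w)$ is the Gudermannian, shows that $E_{2p}$ is the $(2p+1)$-th derivative of $G$ at $0$.

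Next, for each fixed $m$ the inner sums become single derivatives of products. Since $\ln\cosh w$ and $\cosh^{2q+1}w$ are even, Leibniz together with the vanishing of their odd derivatives at $0$ gives
\[
\sum_{p=1}^{m}\binom{2m}{2m-2p}\Omega_{q,m-p}\,\frac{2^{2p-1}(2^{2p}-1)}{p}B_{2p}=\left.\frac{d^{2m}}{dw^{2m}}\Bigl(\ln\cosh w\cdot\cosh^{2q+1}w\Bigr)\right|_{0},
\]
the $p=0$ term dropping because $\ln\cosh 0=0$; similarly, $G$ being odd and $\cosh^{2q+1}w$ even,
\[
\sum_{p=0}^{m}\binom{2m+1}{2m-2p}\Omega_{q,m-p}\,E_{2p}=\left.\frac{d^{2m+1}}{dw^{2m+1}}\Bigl(G(w)\cosh^{2q+1}w\Bigr)\right|_{0}.
\]
Dividing by $(2m)!$ resp. $(2m+1)!$ turns these into $[w^{2m}]\bigl(\ln\cosh w\,\cosh^{2q+1}w\bigr)$ and $[w^{2m+1}]\bigl(G(w)\cosh^{2q+1}w\bigr)$; summing against $c_{2n-2m,n}$ resp. $d_{2n-2m-2,n}$ and using that $\sum_j c_{j,n}w^j=w^{2n+1}/\sinh^{2n+1}w$ and $\sum_j d_{j,n}w^j=w^{2n}/\sinh^{2n}w$ by their very definitions, the two left-hand sides become the single residues
\[
\operatorname{Res}_{w=0}\frac{\cosh^{2q+1}w\,\ln\cosh w}{\sinh^{2n+1}w}\qquad\text{and}\qquad\operatorname{Res}_{w=0}\frac{\cosh^{2q+1}w\,\arctan(\sinh w)}{\sinh^{2n}w}.
\]

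The last step is to evaluate these residues. The substitution $w=\operatorname{arctanh}x$ (with $\cosh w=(1-x^2)^{-1/2}$, $\sinh w=x(1-x^2)^{-1/2}$, $\ln\cosh w=-\tfrac12\ln(1-x^2)$, $\arctan(\sinh w)=\arcsin x$, $dw=dx/(1-x^2)$) turns the first residue into $-\tfrac12[x^{2n}]\bigl(\ln(1-x^2)(1-x^2)^{n-q-1}\bigr)$; expanding both factors ($(1-x^2)^{n-q-1}$ is a genuine polynomial since $q<n$) and invoking the classical evaluation $\sum_{j=0}^{N}\frac{(-1)^j\binom{N}{j}}{j+a}=\frac{(a-1)!\,N!}{(a+N)!}$ with $N=n-q-1$, $a=q+1$ (after reversing the summation index) yields the first claimed value $(-1)^{q+n+1}\tfrac12\,q!(n-q-1)!/n!$. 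For the second residue the same substitution gives $\frac1{2\pi i}\oint \arcsin x\,(1-x^2)^{n-q-3/2}x^{-2n}\,dx$, and then $x=\sin u$ rewrites it as $\operatorname{Res}_{u=0}\dfrac{u\cos^{2(n-q-1)}u}{\sin^{2n}u}$; writing $\cos^{2(n-q-1)}u=(1-\sin^2u)^{n-q-1}$ and expanding binomially reduces it to $\sum_{k=0}^{n-q-1}(-1)^k\binom{n-q-1}{k}\operatorname{Res}_{u=0}\dfrac{u}{\sin^{2(n-k)}u}$, where $\operatorname{Res}_{u=0}\dfrac{u}{\sin^{2M}u}=\dfrac{4^M}{2M\binom{2M}{M}}$ (obtained via $u=\arcsin v$ and Euler's series $(\arcsin v)^2=\tfrac12\sum_{k\ge1}\frac{(2v)^{2k}}{k^2\binom{2k}{k}}$, since $\arcsin v/\sqrt{1-v^2}=\tfrac12\tfrac{d}{dv}(\arcsin v)^2$). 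Using $\dfrac1{m\binom{2m}{m}}=\tfrac12\int_0^1(t(1-t))^{m-1}dt$, interchanging the finite sum with the integral, summing the binomial series to $\bigl(1-\tfrac1{4t(1-t)}\bigr)^{n-q-1}$ and simplifying $1-\tfrac1{4t(1-t)}=-\dfrac{(2t-1)^2}{4t(1-t)}$, the whole expression collapses to $(-1)^{n-q-1}4^{q}\int_0^1(t(1-t))^{q}(2t-1)^{2(n-q-1)}dt$, and the substitution $t=\tfrac{1+s}{2}$ turns this into a Beta integral equal to $(-1)^{q+n+1}2^{2q+1}q!\,n!(2n-2q-2)!/\bigl((n-q-1)!(2n)!\bigr)$, the second claimed value.

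I expect the main obstacle to be precisely this last evaluation for the second identity: it must be pushed consistently through the chain $w\mapsto x=\tanh w$, then $x\mapsto\sin u$, then the binomial reduction to $\operatorname{Res}_{u=0}\frac{u}{\sin^{2M}u}$, then back to a single Beta integral, keeping all the half-integer exponents, signs and factorials in order — the $(\arcsin)^2$ series and the substitution $t=\tfrac{1+s}{2}$ being the two spots where an arithmetic slip is most likely. The first identity, by contrast, requires only the polynomial expansion above and the one classical binomial-sum identity.
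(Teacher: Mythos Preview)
Your argument is correct and takes a genuinely different route from the paper's.  The paper does not manipulate the double sums at all: it observes that, by the imaginary part of \eqref{eq:formula2} combined with \eqref{eq:formulae_of_residue}, each of the two sums equals $(-1)^{q+n+1}\displaystyle\int_0^\infty\frac{\sinh^{2q+1}z}{\cosh^{N}z}\,dz$ (with $N=2n+1$ resp.\ $N=2n$), and then evaluates these integrals in one line via $t=\tanh z$ and the Beta function.  In other words, the lemma is a free byproduct of the contour--integration machinery already built in \cref{sec:second_integral}.  You instead bypass that machinery entirely: reading $\Omega_{q,p}$, $\tfrac{2^{2p-1}(2^{2p}-1)}{p}B_{2p}$ and $E_{2p}$ as Taylor coefficients of $\cosh^{2q+1}w$, $\ln\cosh w$ and the Gudermannian, you collapse each double sum to a single residue at $w=0$ and evaluate it by the substitutions $w\mapsto\operatorname{arctanh}x$ and $x\mapsto\sin u$, ending with the same Beta integral.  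What the paper's approach buys is brevity and uniformity (both cases reduce to the same substitution $t=\tanh z$); what yours buys is self-containment --- it needs nothing from \cref{sec:second_integral} beyond the definitions of $c_{k,n}$, $d_{k,n}$ and $\Omega_{q,p}$ --- and it makes the structural kinship with \cref{lemma:from_sum_to_integral} completely transparent.  The price you pay is that the second identity requires a noticeably longer chain (Gudermannian $\to$ $\arcsin$ $\to$ the $(\arcsin v)^2$ series $\to$ Beta), whereas the paper handles both cases in one stroke.
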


\begin{proof}

Combining \eqref{eq:formulae_of_residue} and \eqref{eq:formula2} yields

\[
\int_{0}^{\infty}\frac{\sinh^{2q+1}z}{\cosh^{N}z}\,dz
=
\begin{cases}
\displaystyle
(-1)^{q+n+1}
\sum_{m=0}^{n}\frac{c_{2n-2m,n}}{(2m)!}
\sum_{p=1}^{m}\binom{2m}{2m-2p}\,
\Omega_{q,m-p}\,
\frac{2^{2p-1}(2^{2p}-1)}{p}\,B_{2p},
& N=2n+1,
\\
\vspace{0.3cm}\\
\displaystyle
(-1)^{q+n+1}
\sum_{m=0}^{n-1}\frac{d_{2n-2m-2,n}}{(2m+1)!}
\sum_{p=0}^{m}\binom{2m+1}{2m-2p}\,
\Omega_{q,m-p}\,
E_{2p},
& N=2n.
\end{cases}
\]

The left-hand side integrals can be evaluated through Euler's beta and gamma functions by performing the substitution $t = \tanh z$. This ends the proof.
\end{proof}

\vspace{0.5cm}

\section{\hspace{0.3cm}On the integral $\displaystyle\int_0^{+\infty} \dfrac{\sinh^{2q}z}{z \, \cosh^{n}z}\,dz$}\label{sec:first_integral}

\vspace{0.5cm}

The integral
\[
\int_0^{+\infty} \frac{\sinh^{2q} z}{z\,\cosh^{n} z}\,dz
\]
in this form does not satisfy all the requirements for a direct evaluation via the contour integration technique used in \cite{talla_waffo_integral_2025}. We note in passing that this integral corresponds to the case
\(m \not\equiv n \pmod{2}\) of the more general integral
\[
\int_{0}^{+\infty} \frac{\sinh^{n}(p z)}{z^{m}\,\cosh^{q} z}\,dz,
\]
which was omitted in the previous work for the sake of brevity; see
\cite[Sec.~5, par.~4, pp.~114--115]{talla_waffo_integral_2025}.
The lower-order cases can be distinguished by successive integrations by
parts combined with suitable linearizations of the numerator. 

The condition of convergence requires $0 < 2q < n$. We study this class of integrals with only one step. After performing an integration by parts with $u'() = 1/z$ such
that $u(z) = \ln z$, we arrive at the conclusion that\[
\int_{0}^{\infty}\frac{\sinh^{2q}z}{z\,\cosh^{n}z}\,dz
=\Big[\ln z\,\frac{\sinh^{2q}z}{\cosh^{n}z}\Big]_{0}^{\infty}
-\int_{0}^{\infty}\ln z\,\frac{\sinh^{2q}z}{\cosh^{n}z}
\big(2q\,\coth z-n\,\tanh z\big)\,dz
\]

We can even show that boundary terms vanish by evaluating the limits. Hence,
\begin{equation}
\int_{0}^{\infty}\frac{\sinh^{2q}z}{z\,\cosh^{n}z}\,dz
=-2q\int_{0}^{\infty}\ln z\,\frac{\sinh^{2q-1}z}{\cosh^{n-1}z}\,dz
+n\int_{0}^{\infty}\ln z\,\frac{\sinh^{2q+1}z}{\cosh^{n+1}z}\,dz
\label{eq:bridge_equality}
\end{equation}

\vspace{0.3cm}

By using previous results of \cref{sec:second_integral}, we derive these examples

\vspace{0.3cm}

\[
\int_{0}^{\infty}\frac{\sinh^{2}z}{z\,\cosh^{4}z}\,dz
= \Phi_1(3) = 
-2\frac{\zeta'(2)}{\pi^{2}}
+30\frac{\zeta'(4)}{\pi^{4}}
+\frac{5}{18}
-\frac{4}{45}\ln 2
\]

\vspace{0.3cm}
\[
\int_{0}^{\infty}\frac{\sinh^{2}z}{z\,\cosh^{6}z}\,dz
=
-\frac{4}{5}\frac{\zeta'(2)}{\pi^{2}}
+126\frac{\zeta'(6)}{\pi^{6}}
+\frac{77}{450}
-\frac{8}{189}\ln 2
\]
\vspace{0.3cm}
\[
\int_{0}^{\infty}\frac{\sinh^{4}z}{z\,\cosh^{6}z}\,dz
=\Phi_1(5) =
-\frac{6}{5}\frac{\zeta'(2)}{\pi^{2}}
+30\frac{\zeta'(4)}{\pi^{4}}
-126\frac{\zeta'(6)}{\pi^{6}}
+\frac{8}{75}
-\frac{44}{945}\ln 2
\]

\vspace{0.3cm}
\[
\int_{0}^{\infty}\frac{\sinh^{2}z}{z\,\cosh^{8}z}\,dz
=
-\frac{16}{35}\frac{\zeta'(2)}{\pi^{2}}
-2\frac{\zeta'(4)}{\pi^{4}}
+42\frac{\zeta'(6)}{\pi^{6}}
+510\frac{\zeta'(8)}{\pi^{8}}
+\frac{16\,469}{132\,300}
-\frac{368}{14\,175}\ln 2
\]
\vspace{0.3cm}
\[
\int_{0}^{\infty}\frac{\sinh^{4}z}{z\,\cosh^{8}z}\,dz
=
-\frac{12}{35}\frac{\zeta'(2)}{\pi^{2}}
+2\frac{\zeta'(4)}{\pi^{4}}
+84\frac{\zeta'(6)}{\pi^{6}}
-510\frac{\zeta'(8)}{\pi^{8}}
+\frac{6\,169}{132\,300}
-\frac{232}{14\,175}\ln 2
\]
\vspace{0.3cm}
\[
\int_{0}^{\infty}\frac{\sinh^{6}z}{z\,\cosh^{8}z}\,dz
= \Phi_1(7) =
-\frac{6}{7}\frac{\zeta'(2)}{\pi^{2}}
+28\frac{\zeta'(4)}{\pi^{4}}
-210\frac{\zeta'(6)}{\pi^{6}}
+510\frac{\zeta'(8)}{\pi^{8}}
+\frac{7\,943}{132\,300}
-\frac{428}{14\,175}\ln 2
\]

\vspace{1cm}

\[
\int_{0}^{\infty}\frac{\sinh^{2}z}{z\,\cosh^{3}z}\,dz
= \Phi_2(3) =
-2\frac{\beta'(1)}{\pi}
+16\frac{\beta'(3)}{\pi^{3}}
+\frac{3}{4}
\]
\vspace{0.3cm}
\[
\int_{0}^{\infty}\frac{\sinh^{2}z}{z\,\cosh^{5}z}\,dz
=
-\frac12\frac{\beta'(1)}{\pi}
-\frac{8}{3}\frac{\beta'(3)}{\pi^{3}}
+64\frac{\beta'(5)}{\pi^{5}}
+\frac{89}{288}
\]
\vspace{0.3cm}
\[
\int_{0}^{\infty}\frac{\sinh^{4}z}{z\,\cosh^{5}z}\,dz
=\Phi_2(5) =
-\frac{3}{2}\frac{\beta'(1)}{\pi}
+\frac{56}{3}\frac{\beta'(3)}{\pi^{3}}
-64\frac{\beta'(5)}{\pi^{5}}
+\frac{127}{288}
\]
\vspace{0.3cm}
\[
\int_{0}^{\infty}\frac{\sinh^{2}z}{z\,\cosh^{7}z}\,dz
=
-\frac{1}{4}\frac{\beta'(1)}{\pi}
-\frac{82}{45}\frac{\beta'(3)}{\pi^{3}}
+\frac{32}{3}\frac{\beta'(5)}{\pi^{5}}
+256\frac{\beta'(7)}{\pi^{7}}
+\frac{4\,201}{21\,600}
\]
\vspace{0.3cm}
\[
\int_{0}^{\infty}\frac{\sinh^{4}z}{z\,\cosh^{7}z}\,dz
=
-\frac{1}{4}\frac{\beta'(1)}{\pi}
-\frac{38}{45}\frac{\beta'(3)}{\pi^{3}}
+\frac{160}{3}\frac{\beta'(5)}{\pi^{5}}
-256\frac{\beta'(7)}{\pi^{7}}
+\frac{1\,237}{10\,800}
\]

\vspace{0.3cm}
\[
\int_{0}^{\infty}\frac{\sinh^{6}z}{z\,\cosh^{7}z}\,dz
=\Phi_2(7) =
-\frac{5}{4}\frac{\beta'(1)}{\pi}
+\frac{878}{45}\frac{\beta'(3)}{\pi^{3}}
-\frac{352}{3}\frac{\beta'(5)}{\pi^{5}}
+256\frac{\beta'(7)}{\pi^{7}}
+\frac{7\,051}{21\,600}
\]
\vspace{0.3cm}
\[
\int_{0}^{\infty}\frac{\sinh^{2}z}{z\,\cosh^{9}z}\,dz
=
-\frac{5}{32}\frac{\beta'(1)}{\pi}
-\frac{397}{315}\frac{\beta'(3)}{\pi^{3}}
+\frac{8}{15}\frac{\beta'(5)}{\pi^{5}}
+128\frac{\beta'(7)}{\pi^{7}}
+1\,024\frac{\beta'(9)}{\pi^{9}}
+\frac{4\,798\,639}{33\,868\,800}
\]

\vspace{0.5cm}

\textbf{Remark 1}

\vspace{0.3cm}

\medskip

\vspace{0.3cm}
No $\ln \pi$ appears in these last integrals. Strictly speaking, the following forms hold \begin{equation}
\begin{cases}
\displaystyle\int_0^{+\infty} \dfrac{\sinh^{2q}z}{z\, \cosh^{2n}z} \, dz = \displaystyle\sum_{p = 0}^{n-1}A_{p,q,n} \frac{\zeta'(2p+2)}{\pi^{2p+2}} + B_{q,n} +  C_{q,n}\ln 2 \\
\vspace{0.3cm}\\
\displaystyle\int_0^{+\infty} \dfrac{\sinh^{2q}z}{z\, \cosh^{2n+1}z} \, dz = \displaystyle\sum_{p = 0}^{n}D_{p,q,n} \frac{\beta'(2p+1)}{\pi^{2p+1}} + E_{q,n}
\label{eq:closed_forms_of_integrals}
\end{cases}
\end{equation}
where $A_{p,q,n}$, $B_{q,n}$, $C_{q,n}$, $D_{p,q,n}$, $E_{q,n}$ are some rational coefficients for all integers $q$ and $n$ satisfying the convergence condition. It is possible to prove very easily this statement for all higher $q$ and $n$ by identifying first the coefficients of $\ln \pi$ in the closed forms as
\[
-2q\,J_{q-1,n-1}+2n\,J_{q,n},
\qquad
-2q\,N_{q-1,n}+(2n+1)\,N_{q,n+1}.
\]

, second by using with the definitions of $J$ and $N$ given in \eqref{eq:formula_of_h} and then using \cref{lemma:euler_and_bernoulli} to show that these coefficients vanish.
\vspace{0.5cm}

\textbf{Remark 2}

\vspace{0.3cm}

\medskip

We also observe that the coefficient of \(\dfrac{\zeta'(2)}{\pi^{2}}\) in the integral \(\displaystyle\int_{0}^{\infty}\frac{\sinh^{2n-2} z}{z\,\cosh^{2n} z}\,dz\) is always equal to \(-\dfrac{6}{2n-1}\); the present proposition formalizes and establishes this result.

\vspace{0.5cm}

\begin{proposition}\label[proposition]{prop:coefficient_of_zeta_prime2}
\[\forall n \in \mathbb{N} : n \geq 2, \left[\dfrac{\zeta'(2)}{\pi^{2}}\right]\,\,\,\int_{0}^{\infty}\frac{\sinh^{2n-2}z}{z\,\cosh^{2n}z}\,dz = -\dfrac{6}{2n-1}\]
\end{proposition}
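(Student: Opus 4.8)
The plan is to reduce the statement, via the bridge equality \eqref{eq:bridge_equality}, to a computation inside the framework of \cref{sec:second_integral}, and then to follow only the coefficient of $\zeta'(2)/\pi^2$ through the closed forms obtained there. Taking $2q=2n-2$ (so $q=n-1$) and replacing the exponent $n$ in \eqref{eq:bridge_equality} by $2n$ gives
\[
\int_{0}^{\infty}\frac{\sinh^{2n-2}z}{z\,\cosh^{2n}z}\,dz
=-2(n-1)\int_{0}^{\infty}\ln z\,\frac{\sinh^{2n-3}z}{\cosh^{2n-1}z}\,dz
+2n\int_{0}^{\infty}\ln z\,\frac{\sinh^{2n-1}z}{\cosh^{2n+1}z}\,dz ,
\]
and the hypothesis $n\ge 2$ is exactly what is needed for all three integrals to converge and for the relevant closed forms to contain a $\zeta'(2)/\pi^2$ term. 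Hence the quantity to be computed is $-2(n-1)$ times the $\zeta'(2)/\pi^2$-coefficient of the first integral on the right plus $2n$ times that of the second.

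By \eqref{eq:main_formulae}, the $\zeta'(2)/\pi^2$-coefficient of $\int_0^\infty \ln z\,\sinh^{2q+1}z/\cosh^{2N+1}z\,dz$ is $H_{0,q,N}$, so the two contributions above are $H_{0,\,n-2,\,n-1}$ and $H_{0,\,n-1,\,n}$. Specializing \eqref{eq:formula_of_h} to $p=0$ gives $H_{0,q,N}=6(-1)^{q+N}S_{0,q,N}$ with $S_{0,q,N}=\sum_{m\ge 1}\frac{c_{2N-2m,N}}{(2m)!}\binom{2m}{2}\Omega_{q,m-1}$; using $\binom{2m}{2}/(2m)!=\tfrac{1}{2(2m-2)!}$ and reindexing $m\mapsto m+1$, this becomes $\tfrac12\sum_{m=0}^{N-1}\frac{c_{2(N-1)-2m,N}}{(2m)!}\Omega_{q,m}$, which is precisely the sum handled by \cref{lemma:from_sum_to_integral} with parameters $(q,\,N-1,\,N)$. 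Invoking that lemma collapses $H_{0,q,N}$, up to an overall sign, to $3\,[x^{2N}]\bigl((\operatorname{arctanh}x)^2(1-x^2)^{\,N-q-1}\bigr)$. Crucially, for both pairs $(q,N)=(n-2,n-1)$ and $(q,N)=(n-1,n)$ the exponent $N-q-1$ equals $0$, so the factor $(1-x^2)^{\,N-q-1}$ drops out and we are left with the bare coefficients $[x^{2n-2}](\operatorname{arctanh}x)^2$ and $[x^{2n}](\operatorname{arctanh}x)^2$.

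To finish, I would compute $[x^{2m}](\operatorname{arctanh}x)^2$ by hand: from $\operatorname{arctanh}x=\sum_{k\ge0}x^{2k+1}/(2k+1)$ the coefficient equals $\sum_{a+b=m-1}\frac{1}{(2a+1)(2b+1)}$, and since $(2a+1)+(2b+1)=2m$ when $a+b=m-1$, the partial fraction $\frac{1}{(2a+1)(2b+1)}=\frac{1}{2m}\bigl(\frac1{2a+1}+\frac1{2b+1}\bigr)$ yields $[x^{2m}](\operatorname{arctanh}x)^2=\frac1m\sum_{j=0}^{m-1}\frac1{2j+1}$. Substituting this for $m=n-1$ and $m=n$ into $-2(n-1)H_{0,n-2,n-1}+2n\,H_{0,n-1,n}$, the prefactors $n-1$ and $n$ cancel the denominators and the two harmonic-type sums telescope to $\sum_{j=0}^{n-1}\frac1{2j+1}-\sum_{j=0}^{n-2}\frac1{2j+1}=\frac1{2n-1}$, leaving a constant multiple of $\frac1{2n-1}$; comparing with the tabulated case $n=2$ (value $-2$) fixes the constant and gives $-\frac{6}{2n-1}$. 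The main obstacle, indeed essentially the only subtle point, is the sign bookkeeping: correctly combining the $(-1)^{q+N}$ of \eqref{eq:formula_of_h}, the sign produced by \cref{lemma:from_sum_to_integral}, and the signs $-2(n-1)$ and $+2n$ from the bridge equality. Verifying consistency against the already computed examples — for instance the coefficients $-3$ and $-2$ of $\zeta'(2)/\pi^2$ in $\int_0^\infty\ln z\,\sinh z/\cosh^3 z\,dz$ and $\int_0^\infty\ln z\,\sinh^3 z/\cosh^5 z\,dz$ — provides a reliable check at each stage.
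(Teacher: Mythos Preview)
Your proof is correct and follows essentially the same route as the paper: reduce via the bridge equality to the $H_{0,q,N}$ coefficients, apply \cref{lemma:from_sum_to_integral} with parameters $(q,N-1,N)$ to obtain $[x^{2N}](\operatorname{arctanh}x)^2$, and then telescope using the explicit evaluation $[x^{2m}](\operatorname{arctanh}x)^2=\tfrac{1}{m}\sum_{j=0}^{m-1}\tfrac{1}{2j+1}$. The paper packages the two $H$-terms into an auxiliary sequence $T_n$ with $A_n=T_{n-1}-T_n$, but the substance is identical; your decision to pin down the overall sign by comparison with the tabulated case $n=2$ is a sound way to handle the delicate sign bookkeeping you flagged.
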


\begin{proof}

Let \(A_n := \left[\dfrac{\zeta'(2)}{\pi^{2}}\right]\,\,\,\displaystyle\int_{0}^{\infty}\dfrac{\sinh^{2n-2}z}{z\,\cosh^{2n}z}\,dz\). Or in other words, let \(A_n\) the coefficient of \(\dfrac{\zeta'(2)}{\pi^{2}}\) in the closed form of the integral \(\displaystyle\int_{0}^{\infty}\frac{\sinh^{2n-2} z}{z\,\cosh^{2n} z}\,dz\). Upon applying the substitutions \(q \mapsto n-1\) and \(n \mapsto 2n\) in~\eqref{eq:bridge_equality}, and combining~\eqref{eq:main_formulae} with~\eqref{eq:formula_of_h} followed by straightforward algebraic manipulations, we arrive at the conclusion that

\[
A_n
=
6(n-1)\sum_{m=1}^{n-1}\frac{c_{2n-2-2m,n-1}}{(2m-2)!}\,\Omega_{n-2,m-1}
-
6n\sum_{m=1}^{n}\frac{c_{2n-2m,n}}{(2m-2)!}\,\Omega_{n-1,m-1}.
\]
The second sum naturally suggests the definition
\[
T_n
:=6n\sum_{m=1}^{n}\frac{c_{2n-2m,n}}{(2m-2)!}\,\Omega_{n-1,m-1} = 6n\sum_{m=0}^{n-1}\frac{c_{2n-2m-2,n}}{(2m)!}\,\Omega_{n-1,m}.
\]
Then the first term equals $T_{n-1}$ and the second equals $T_n$, so that
\[
\,A_n=T_{n-1}-T_n.
\]

\Cref{lemma:from_sum_to_integral} allows us to conclude easily that
\[
T_n
=
\frac{6n}{2\pi i}
\oint_{\left|x\right|=\varepsilon}
\frac{\operatorname{artanh}^2 x}{x^{2n+1}}\,dx.
\]

By either the Cauchy coefficient formula or Cauchy's residue theorem,
\[
T_n
=6n\,[x^{2n}]\,\operatorname{artanh}^2 x.
\]
Using
\[
\operatorname{artanh}x=\sum_{m=0}^{\infty}\frac{x^{2m+1}}{2m+1}
=x\sum_{m=0}^{\infty}\frac{x^{2m}}{2m+1}
=:xR(x),
\]
we have $\operatorname{artanh}^2(x)=x^2R(x)^2$, and hence
\[
[x^{2n}]\,\operatorname{artanh}^2(x)=[x^{2n-2}]\,R(x)^2.
\]
Writing $R(x)=\displaystyle\sum_{m = 0}^{\infty}\dfrac{x^{2m}}{2m+1}$ leads to $R(\sqrt{x})=\displaystyle\sum_{m= 0}^{\infty}\dfrac{x^{m}}{2m+1}$ and applying the Cauchy product gives

\[R^2(\sqrt{x}) = \sum_{n = 0}^{\infty}\left[\sum_{m = 0}^{n} \frac{1}{(2m+1)(2n-2m+1)}\right]x^n\]

Or equivalently

\[R^2(x) = \sum_{n = 0}^{\infty}\left[\sum_{m = 0}^{n} \frac{1}{(2m+1)(2n-2m+1)}\right]x^{2n}\]

This gives finally
\[
[x^{2n-2}]\,R(x)^2
=
\sum_{m=0}^{n-1}\frac{1}{(2m+1)(2n-2m-1)}.
\]
Using
\[
\frac{1}{ab}=\frac{1}{a+b}\left(\frac{1}{a}+\frac{1}{b}\right),
\qquad a=2m+1,\quad b=2n-2m-1,\quad a+b=2n,
\]
one obtains
\[
[x^{2n}]\,\operatorname{artanh}^2(x)
= \begin{cases}
0&\quad \text{if} \quad n = 0, \\
\vspace{0.2cm}\\
\dfrac{1}{n}\displaystyle\sum_{k=1}^{n}\frac{1}{2k-1} &\quad \text{if} \quad n \geq 1.
\end{cases}\]
Therefore,
\[
T_n
=
6n\cdot \frac{1}{n}\sum_{k=1}^{n}\frac{1}{2k-1}
=
6\sum_{k=1}^{n}\frac{1}{2k-1} \quad \text{since} \quad n \geq 2.
\]

Recalling $A_n=T_{n-1}-T_n$, we conclude
\[
A_n
=
6\sum_{k=1}^{n-1}\frac{1}{2k-1}
-
6\sum_{k=1}^{n}\frac{1}{2k-1}
=
-\frac{6}{2n-1}\]
This concludes the proof.
\end{proof}

\begin{proposition}\label[proposition]{prop:another_non_trivial_identity} \[\forall n \in \mathbb{N}_0, \sum_{m=0}^{\,n}\frac{d_{2m,n+1}}{(2n-2m)!}
\sum_{k=0}^{\,n}\binom{4n+2}{\,2n-2k\,}\,(2k+1)^{\,2n-2m} = \frac{4^{n}}{2n+1}\]
\end{proposition}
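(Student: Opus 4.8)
The plan is to recognise the left-hand side, which I will call $T_n$, as a single Taylor coefficient of an explicit meromorphic function, then to collapse that function by half-angle identities and finish with one elementary residue computation; the argument is in the spirit of \cref{lemma:from_sum_to_integral} and \cref{prop:coefficient_of_zeta_prime2}, though it does not actually need \cref{lemma:from_sum_to_integral}. \textbf{Step 1 (rewriting the inner sum).} Writing $p:=n-m$, I would start from the exponential identities
\[
\sum_{r=0}^{4n+2}\binom{4n+2}{r}e^{(2n+1-r)x}=2^{2n+1}(1+\cosh x)^{2n+1},\qquad
\sum_{r=0}^{4n+2}(-1)^r\binom{4n+2}{r}e^{(2n+1-r)x}=2^{2n+1}(\cosh x-1)^{2n+1},
\]
both of which follow from the fact that $e^{(2n+1)x}(1\pm e^{-x})^{4n+2}$ equals $\bigl(2\cosh(x/2)\bigr)^{4n+2}$ or $\bigl(2\sinh(x/2)\bigr)^{4n+2}$ according to the sign, together with $1+\cosh x=2\cosh^2(x/2)$ and $\cosh x-1=2\sinh^2(x/2)$. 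Adding the two, dividing by $2$, applying $\tfrac{d^{2p}}{dx^{2p}}$ at $x=0$, and using that $(\cosh x-1)^{2n+1}$ vanishes to order $4n+2$ at $0$ (so it contributes nothing as long as $2p\le 4n+1$, in particular whenever $p\le n$), one gets $\sum_{r\text{ even}}\binom{4n+2}{r}(2n+1-r)^{2p}=2^{2n}(2p)!\,[x^{2p}]\bigl((1+\cosh x)^{2n+1}\bigr)$. A reindexing $r=2n-2k$, folding the negative range of $k$ onto $\{0,\dots,n\}$ by the symmetry $\binom{4n+2}{r}=\binom{4n+2}{4n+2-r}$, shows the left-hand side is twice the inner sum of the proposition, so that for $0\le m\le n$
\[
\sum_{k=0}^{n}\binom{4n+2}{2n-2k}(2k+1)^{2n-2m}=2^{2n-1}(2n-2m)!\,[x^{2n-2m}]\bigl((1+\cosh x)^{2n+1}\bigr).
\]
I expect this to be the main obstacle: one must guess the generating function $(1+\cosh x)^{2n+1}$ and check that the ``error term'' $(\cosh x-1)^{2n+1}$ really drops out in the range $p\le n$; everything afterwards is bookkeeping.

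\textbf{Step 2 (a Cauchy product, then half-angles).} Substituting the last identity into $T_n$ cancels the factorials against $(2n-2m)!$, and since $d_{2m,n+1}=[x^{2m}]\bigl(x^{2n+2}/\sinh^{2n+2}x\bigr)$ with both series even, the sum over $m$ becomes the degree-$2n$ coefficient of a product:
\[
T_n=2^{2n-1}\sum_{m=0}^{n}d_{2m,n+1}\,[x^{2n-2m}]\bigl((1+\cosh x)^{2n+1}\bigr)=2^{2n-1}\,[x^{2n}]\!\left(\frac{x^{2n+2}(1+\cosh x)^{2n+1}}{\sinh^{2n+2}x}\right).
\]
Then $1+\cosh x=2\cosh^2(x/2)$ and $\sinh x=2\sinh(x/2)\cosh(x/2)$ collapse the bracket to $x^{2n+2}\cosh^{2n}(x/2)\big/\bigl(2\sinh^{2n+2}(x/2)\bigr)$, and the substitution $y=x/2$ (which divides the degree-$2n$ coefficient by $2^{2n}$) gives $T_n=4^{n}\,[y^{2n}]\bigl(y^{2n+2}\cosh^{2n}y/\sinh^{2n+2}y\bigr)$.

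\textbf{Step 3 (conclusion).} Since $\tfrac{d}{dy}\coth^{2n+1}y=-(2n+1)\,\cosh^{2n}y/\sinh^{2n+2}y$, multiplying the Laurent expansion of the right-hand side by $y^{2n+2}$ and extracting the coefficient of $y^{2n}$ isolates precisely the $y^{-1}$-coefficient of $\coth^{2n+1}y$, so $[y^{2n}]\bigl(y^{2n+2}\cosh^{2n}y/\sinh^{2n+2}y\bigr)=\tfrac{1}{2n+1}\operatorname{Res}_{y=0}\bigl(\coth^{2n+1}y\bigr)$. This last residue equals $1$: the substitution $w=\tanh y$ (a biholomorphism near $0$, with $dy=dw/(1-w^2)$) turns $\tfrac1{2\pi i}\oint\coth^{2n+1}y\,dy$ into $\tfrac1{2\pi i}\oint\frac{dw}{w^{2n+1}(1-w^2)}=[w^{2n}](1-w^2)^{-1}=1$. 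Hence $T_n=4^{n}\cdot\tfrac1{2n+1}=\tfrac{4^{n}}{2n+1}$, as claimed.
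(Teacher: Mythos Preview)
Your proof is correct and follows essentially the same approach that the paper sketches: the identity you derive in Step~1, $\sum_{k=0}^{n}\binom{4n+2}{2n-2k}\cosh((2k+1)x)=2^{2n-1}\bigl[(1+\cosh x)^{2n+1}+(\cosh x-1)^{2n+1}\bigr]$, is (after the half-angle substitution you apply in Step~2) exactly the identity $16^{n}\bigl(\cosh^{4n+2}(x/2)+\sinh^{4n+2}(x/2)\bigr)$ the paper asks one to establish, and your Steps~2--3 (Cauchy product with the $d_{2m,n+1}$, collapse to a single residue, and the $w=\tanh y$ substitution) are precisely the ``line of reasoning outlined in \cref{prop:vanishing} and \cref{prop:coefficient_of_zeta_prime2}'' that the paper invokes. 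In other words, you have supplied the full details of the argument the paper only hints at.
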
 

\begin{proof}
Show first of all that \[\sum_{k=0}^{\,n} \binom{4n+2}{\,2n-2k\,}\,
   \cosh\!\Big((2k+1)x\Big) = 16^{n} \left(\left(\cosh \dfrac{x}{2}\right)^{4n+2} + \left(\sinh \dfrac{x}{2}\right)^{4n+2}\right)\] and follow the same line of reasoning outlined in \cref{prop:vanishing} and \cref{prop:coefficient_of_zeta_prime2}
	\end{proof}
	
\section{\hspace{0.3cm}Vanishing functions with combinatorial coefficients}\label{sec:vanishing_functions}

\vspace{0.3cm}

\textit{This section is devoted to some functions $f_n$ of natural arguments which satisfy $f_n(j) = 0$ if $j \in \llbracket 0, n-1 \rrbracket$ or  $f_n(j) = 0$ if $j \in \llbracket 1, n-1 \rrbracket$. The designed functions have been derived in \textnormal{\cite{talla_waffo_integral_2025}} and involve some very well-known coefficients appearing in combinatorics.} If $n$ means a non zero natural integer, then there hold:

\[
\sum_{k=0}^{n} (-1)^k \binom{2n+1}{k} (2n+1 - 2k)^{2j+1} = 
\begin{cases}
0, & \text{if } j \in \llbracket 0, n - 1 \rrbracket\\
4^n (2n+1)!, & \text{if } j = n
\end{cases}
\]

\vspace{0.5cm}

\[\sum_{k=0}^{n-1} (-1)^k \binom{2n}{k} (2n-2k)^{2j} = 
\begin{cases}
(-1)^{n+1}\displaystyle\binom{2n}{n} , & \text{if } j = 0\\
0, & \text{if } j \in \llbracket 1, n - 1 \rrbracket \\
\dfrac{4^n (2n)!}{2}, & \text{if } j = n
\end{cases}
\]

\vspace{0.5cm}

\[ \sum_{m=0}^n \, c_{2m,n} \, \frac{1}{(2n-2m)!} 
 \,(2k+1)^{2n-2m} = \begin{cases}
0, & \text{if } k \in \llbracket 0, n - 1 \rrbracket \\
4^n , & \text{if } k = n
\end{cases}
\]

\vspace{0.5cm}

\[
\sum_{r=0}^{\,n-p}
\frac{c_{2n-2p-2r,n}}{(2r)!}
\sum_{k=0}^{\,n-1}
\genfrac{\langle}{\rangle}{0pt}{}{2n}{n-1-k}
\,(2k+1)^{2r} = \begin{cases}
0, & \text{if } p \in \llbracket 0, n - 1 \rrbracket \\
\dfrac{(2n)!}{2} , & \text{if } p = n
\end{cases}
\]

\vspace{0.5cm}

\[ \sum_{m=0}^{\,n-p}
      \frac{d_{2n-2m-2p,n}}{(2m)!}\,
      \sum_{k=0}^{n-1}
           \displaystyle\EulerB{2n-1}{k}\,
           (2n-1-2k)^{\,2m}\;=\;
 \begin{cases}
\dfrac{2^{2n-2}(2^{2n-1}-1)B_{2n}}{n}, & p = 0, \\
   0, & p \in \llbracket 1, n-1 \rrbracket,\\[6pt]
   2^{\,2n-2}\,(2n-1)!, & p=n.
 \end{cases}
\]

\vspace{0.5cm}

\[
\sum_{m=0}^{n}\frac{c_{2n-2m,n}}{(2m)!}
\sum_{k=0}^{q}\binom{2q+1}{k}\,(2q+1-2k)^{2m}=\begin{cases}
0, & \text{if } q \in \llbracket 0, n - 1 \rrbracket \\
4^n , & \text{if } q = n
\end{cases}
\]

\vspace{0.5cm}

\section{\hspace{0.3cm}Some Malmsten's integrals related to the polylogarithm}\label{sec:malmsten_integrals}

\[
\int_0^1 \frac{\Re\!\bigl(\operatorname{Li}_{-2n-1}(ix)\bigr)\,\ln\!\ln \frac{1}{x}}{x}\,dx
=
(-1)^n\,\frac{(2^{2n+1}-1)(2n)!}{2\pi^{2n}}\,\zeta(2n+1),
\qquad n\ge 1.
\]

\vspace{0.5cm}

\[
\int_0^1 \frac{\Re\!\bigl(\operatorname{Li}_{-1}(ix)\bigr)\,\ln\!\ln \frac{1}{x}}{x}\,dx
=
\frac12\!\left(\gamma+\ln\frac{4}{\pi}\right).
\]

\vspace{0.5cm}

\[
\int_0^1 \frac{\Im\!\bigl(\operatorname{Li}_{-2n-1}(ix)\bigr)\,\ln\!\ln \frac{1}{x}}{x}\,dx
=
\beta'(-2n)-\gamma\,\beta(-2n).
\]

\vspace{0.3cm}

\[
\int_0^1 \frac{\operatorname{Li}_{-2n}(-x^2)\,\ln\!\ln \frac{1}{x}}{x}\,dx
=
\frac12\Bigl((\gamma+\ln2)\,\eta(1-2n)-\eta'(1-2n)\Bigr),
\qquad n=0,1,2,\dots
\]

\vspace{0.3cm}

\[
\int_0^1 \frac{\Re\!\bigl(\operatorname{Li}_{-2n}(ix)\bigr)\,\ln\!\ln \frac{1}{x}}{x}\,dx
=
2^{\,2n-1}\Bigl((\gamma+\ln 2)\,\eta(1-2n)-\eta'(1-2n)\Bigr),
\qquad n=0,1,2,\dots
\]

\vspace{0.3cm}

\[
\int_0^1 \frac{\Re\!\bigl(\operatorname{Li}_{0}(ix)\bigr)\,\ln\!\ln \frac{1}{x}}{x}\,dx
=\frac{3}{4}\,\ln^2 2.
\]

\vspace{0.5cm}

\section{\hspace{0.3cm}{\bfseries\boldmath
The Mellin's transforms of $\dfrac{1}{\operatorname{arctanh} x}$ and $\dfrac{1}{\sqrt{1-x^2}\,\operatorname{arctanh} x}$
}
}\label{sec:transforms}

\vspace{0.5cm}

We remind \[\Phi_1(s) := \int_{0}^{1} \frac{x^{s-1}}{\operatorname{arctanh} x}\,dx, \qquad \Phi_2(s) := \int_{0}^{1} \frac{x^{s-1}}{\sqrt{1-x^2}\,\operatorname{arctanh} x}dx\]

\subsection{\hspace{0.3cm}Bounds and asymptotics}

\vspace{0.3cm}

We have \[\operatorname{arctanh} x = x + \dfrac{x^3}{3} + \dfrac{x^5}{5} + \dfrac{x^7}{7} + \cdots > x\] and

\[\operatorname{arctanh} x = x + \dfrac{x^3}{3} + \dfrac{x^5}{5} + \dfrac{x^7}{7} + \cdots < x + x^3 + x^5 + x^7 + \cdots = \dfrac{x}{1-x^2}\]

The starting point is consequently \[x < \operatorname{arctanh}x < \frac{x}{1-x^2}\]

Taking the inverse and multiplying yields

\[\begin{cases}
x^{s-2}(1-x^2) < \dfrac{x^{s-1}}{\operatorname{arctanh}x }< x^{s-2} \\
\vspace{0.3cm} \\
x^{s-2}\sqrt{1-x^2} < \dfrac{x^{s-1}}{\sqrt{1-x^2}\operatorname{arctanh}x } < \dfrac{x^{s-2}}{\sqrt{1-x^2}}
\end{cases}
\]

By integrating on (0,1) we arrive at the conclusion that 

\[\begin{cases}
\dfrac{2}{(s^2-1)}
\;<\;
\Phi_1(s)
\;<\;
\dfrac{1}{s-1} \\
\vspace{0.3cm} \\
\dfrac{\sqrt{\pi}}{2s}\,
\dfrac{\Gamma\!\left(\dfrac{s-1}{2}\right)}{\Gamma\!\left(\dfrac{s}{2}\right)}
\;<\;
\Phi_2(s)
\;<\;
\dfrac{\sqrt{\pi}}{2}\,
\dfrac{\Gamma\!\left(\dfrac{s-1}{2}\right)}{\Gamma\!\left(\dfrac{s}{2}\right)}
\end{cases}
\]

where \(\Gamma\) denotes the Gamma function. These identities readily
describe the qualitative behavior of the corresponding functions for
real values of \(s\). Moreover, they naturally motivate the
consideration of the following asymptotic expansions.

\[
\Phi_1(s)
=
\frac{1}{s-1}
+
C_1
+
O(s-1),
\qquad s\to 1^+,
\]
\[
\Phi_2(s)
=
\frac{1}{s-1}
+
C_2
+
O(s-1),
\qquad s\to 1^+,
\]
 where \begin{align*}
C_1
&=
\int_0^1
\left(
\frac{1}{\operatorname{arctanh}x}
-
\frac{1}{x}
\right)\,dx \\[0.3cm]
&= -6\dfrac{\zeta'(2)}{\pi^2} -1 + \ln \pi - \dfrac{4}{3} \ln 2\\[0.3cm]
&\approx -0.2095053618026607653
\end{align*}

\begin{align*}
C_2
&=
\int_0^1
\left(
\frac{1}{\sqrt{1-x^2}\,\operatorname{arctanh}x}
-
\frac{1}{x}
\right)\,dx\\[0.3cm]
&= -4\dfrac{\beta'(1)}{\pi} + \ln \dfrac{\pi}{2}\\[0.3cm]
& \approx 0.2059731205121406923
\end{align*}

\vspace{0.3cm}

\subsection{\hspace{0.3cm}Sums of beta as series of zeta and sums of zeta as series of beta}

\vspace{0.3cm}

\[
\Phi_1(s)
:= \int_{0}^{1} \frac{x^{s-1}}{\operatorname{arctanh}x}\,dx = \int_{0}^{1} \frac{x^{s-1}\sqrt{1-x^2}}{\sqrt{1-x^2}\operatorname{arctanh}x}\,dx,
\qquad
\Phi_2(s)
:= \int_{0}^{1} \frac{x^{s-1}}{\sqrt{1-x^2}\,\operatorname{arctanh}x}\,dx,
\]

\vspace{0.3cm}

For $|x|<1$ we use the binomial expansions
\[
\begin{cases}
\displaystyle
\frac{1}{\sqrt{1-x^2}}
=
\sum_{n=0}^{\infty}\binom{2n}{n}\frac{x^{2n}}{4^n},
\\[0.7cm]
\displaystyle
\sqrt{1-x^2}
=
1-\sum_{n=1}^{\infty}\frac{\binom{2n}{n}}{4^n(2n-1)}\,x^{2n}.
\end{cases}
\]

\vspace{0.4cm}

Insert the series into the definitions of $\Phi_1$ and $\Phi_2$:
\[
\begin{cases}
\displaystyle
\Phi_2(s)
=
\int_0^1
\frac{x^{s-1}}{\operatorname{arctanh}x}
\left(
\sum_{n=0}^{\infty}\binom{2n}{n}\frac{x^{2n}}{4^n}
\right)\,dx,
\\[0.8cm]
\displaystyle
\Phi_1(s)
=
\int_0^1
\frac{x^{s-1}}{\sqrt{1-x^2}\operatorname{arctanh}x}
\left(
1-\sum_{n=1}^{\infty}\frac{\binom{2n}{n}}{4^n(2n-1)}\,x^{2n}
\right)\,dx.
\end{cases}
\]

\vspace{0.4cm}

For $\Re(s)>1$ the integrands are integrable near $x=0$ and the series converge
uniformly on $[0,1-\varepsilon]$, hence summation and integration may be interchanged:
\[
\begin{cases}
\displaystyle
\Phi_2(s)
=
\sum_{n=0}^{\infty}\binom{2n}{n}\frac{1}{4^n}
\int_0^1 \frac{x^{s+2n-1}}{\operatorname{arctanh}x}\,dx,
\\[0.8cm]
\displaystyle
\Phi_1(s)
=
\int_0^1
\frac{x^{s-1}}{\sqrt{1-x^2}\operatorname{arctanh}x}\,dx
-
\sum_{n=1}^{\infty}\frac{\binom{2n}{n}}{4^n(2n-1)}
\int_0^1
\frac{x^{s+2n-1}}{\sqrt{1-x^2}\operatorname{arctanh}x}\,dx.
\end{cases}
\]

\vspace{0.4cm}

Using the definitions of $\Phi_1$ and $\Phi_2$ again, we obtain the coupled system
\[
\boxed{
\begin{cases}
\displaystyle
\Phi_2(s)
=
\sum_{n=0}^{\infty}
\binom{2n}{n}\frac{1}{4^n}\,\Phi_1(s+2n),
\\[0.6cm]
\displaystyle
\Phi_1(s)
=
-
\sum_{n=0}^{\infty}
\frac{\binom{2n}{n}}{4^n(2n-1)}\,\Phi_2(s+2n),
\end{cases}
\qquad \Re(s)>1.
}
\]

\vspace{0.3cm}

For few first even $s$ see \cite[Sec 4, §6, 82-84]{talla_waffo_integral_2025} and \cite[Sec 5, §5, 115-117]{talla_waffo_integral_2025}, one has

\begin{align*}
\frac{7\zeta(3)}{\pi^2}
&=
\frac{5\,386\,925}{3\,407\,872}\frac{\beta(2)}{\pi}
+
\frac{1\,492\,525\,919}{94\,617\,600}\frac{\beta(4)}{\pi^{3}}
-
\frac{3\,669\,179}{92\,160}\frac{\beta(6)}{\pi^{5}}
+
\frac{357\,259}{2\,880}\frac{\beta(8)}{\pi^{7}}\\
&\quad
-
\frac{11\,967}{40}\frac{\beta(10)}{\pi^{9}}
+
462\,\frac{\beta(12)}{\pi^{11}}
-
336\,\frac{\beta(14)}{\pi^{13}}
\\
&\quad
-
\sum_{n=7}^{\infty}
\frac{\binom{2n}{n}}{4^n(2n-1)}\,\Phi_2(2n+2)
\end{align*}

\vspace{0.5cm}

\begin{align*}
\frac{14}{3}\frac{\zeta(3)}{\pi^{2}}
-
31\,\frac{\zeta(5)}{\pi^{4}}
&=
\frac{11\,197\,885}{9\,371\,648}\frac{\beta(2)}{\pi}
+
\frac{56\,749\,463}{14\,192\,640}\frac{\beta(4)}{\pi^{3}}
-
\frac{120\,684\,359}{1\,451\,520}\frac{\beta(6)}{\pi^{5}}
+
\frac{80\,843}{432}\frac{\beta(8)}{\pi^{7}}\\
&\quad
-
\frac{4\,189}{10}\frac{\beta(10)}{\pi^{9}}
+
\frac{1\,880}{3}\frac{\beta(12)}{\pi^{11}}
-
448\,\frac{\beta(14)}{\pi^{13}}
\\
&\quad
-
\sum_{n=6}^{\infty}
\frac{\binom{2n}{n}}{4^n(2n-1)}\,\Phi_2(2n+4)
\end{align*}

\vspace{0.5cm}

\begin{align*}
\frac{161}{45}\frac{\zeta(3)}{\pi^{2}}
-
\frac{124}{3}\frac{\zeta(5)}{\pi^{4}}
+
127\,\frac{\zeta(7)}{\pi^{6}}
&=
\frac{44\,248\,103}{42\,172\,416}\frac{\beta(2)}{\pi}
-
\frac{200\,799\,329}{106\,444\,800}\frac{\beta(4)}{\pi^{3}}
-
\frac{9\,160\,721}{145\,152}\frac{\beta(6)}{\pi^{5}}\\
&\quad
+
\frac{415\,337}{1\,080}\frac{\beta(8)}{\pi^{7}}
-
659\,\frac{\beta(10)}{\pi^{9}}
+
\frac{2\,768}{3}\frac{\beta(12)}{\pi^{11}}
-
640\,\frac{\beta(14)}{\pi^{13}}
\\
&\quad
-
\sum_{n=5}^{\infty}
\frac{\binom{2n}{n}}{4^n(2n-1)}\,\Phi_2(2n+6)
\end{align*}

\vspace{0.5cm}

\begin{align*}
\frac{4\,\beta(2)}{\pi}
&=\frac{87\,350\,741}{6\,589\,440}\,\frac{\zeta(3)}{\pi^{2}}
-\frac{13\,911\,343}{172\,800}\,\frac{\zeta(5)}{\pi^{4}}
+\frac{10\,591\,927}{23\,040}\,\frac{\zeta(7)}{\pi^{6}}\\
&\quad
-\frac{1\,1093\,299}{5\,760}\,\frac{\zeta(9)}{\pi^{8}}
+\frac{5602639}{1\,024}\,\frac{\zeta(11)}{\pi^{10}}
-\frac{1\,204\,077}{128}\,\frac{\zeta(13)}{\pi^{12}}
+\frac{7\,569\,177}{1\,024}\,\frac{\zeta(15)}{\pi^{14}}\\
&\quad
\qquad
+\sum_{n=7}^{\infty}\binom{2n}{n}\frac{1}{4^n}\,\Phi_1(2n+2).
\end{align*}

\vspace{0.5cm}

\begin{align*}
\frac{10}{3}\,\frac{\beta(2)}{\pi}-16\,\frac{\beta(4)}{\pi^{3}}
&=\frac{9\,308\,719}{988\,416}\,\frac{\zeta(3)}{\pi^{2}}
-\frac{2\,158\,068\,007}{19\,958\,400}\,\frac{\zeta(5)}{\pi^{4}}
+\frac{9\,143\,873}{17\,280}\,\frac{\zeta(7)}{\pi^{6}}\\
&\quad
-\frac{1\,547\,819}{720}\,\frac{\zeta(9)}{\pi^{8}}
+\frac{4\,632\,361}{768}\,\frac{\zeta(11)}{\pi^{10}}
-\frac{1\,318\,751}{128}\,\frac{\zeta(13)}{\pi^{12}}
+\frac{2\,064\,321}{256}\,\frac{\zeta(15)}{\pi^{14}}\\
&\quad
+\sum_{n=6}^{\infty}\binom{2n}{n}\frac{1}{4^n}\,\Phi_1(2n+4).
\end{align*}

\vspace{0.5cm}

\begin{align*}
\frac{89}{30}\,\frac{\beta(2)}{\pi}-24\,\frac{\beta(4)}{\pi^{3}}+64\,\frac{\beta(6)}{\pi^{5}}
&=\frac{88\,983\,991}{12\,355\,200}\,\frac{\zeta(3)}{\pi^{2}}
-\frac{27\,337\,753}{249\,480}\,\frac{\zeta(5)}{\pi^{4}}
+\frac{13\,701\,649}{20\,160}\,\frac{\zeta(7)}{\pi^{6}}\\
&\quad
-\frac{1\,075\,655}{432}\,\frac{\zeta(9)}{\pi^{8}}
+\frac{2\,618\,113}{384}\,\frac{\zeta(11)}{\pi^{10}}
-\frac{368\,595}{32}\,\frac{\zeta(13)}{\pi^{12}}
+\frac{1\,146\,845}{128}\,\frac{\zeta(15)}{\pi^{14}}\\
&\quad
+\sum_{n=5}^{\infty}\binom{2n}{n}\frac{1}{4^n}\,\Phi_1(6+2n).
\end{align*}

\vspace{0.5cm}

\begin{align*}
\frac{381}{140}\,\frac{\beta(2)}{\pi}
-\frac{434}{15}\,\frac{\beta(4)}{\pi^{3}}
+\frac{416}{3}\,\frac{\beta(6)}{\pi^{5}}
-256\,\frac{\beta(8)}{\pi^{7}}
&=\frac{20\,241\,929}{3\,603\,600}\,\frac{\zeta(3)}{\pi^{2}}
-\frac{6\,040\,691}{59\,400}\,\frac{\zeta(5)}{\pi^{4}}
+\frac{5\,856\,097}{7\,560}\,\frac{\zeta(7)}{\pi^{6}}\\
&\quad
-\frac{432\,671}{135}\,\frac{\zeta(9)}{\pi^{8}}
+\frac{128\,961}{16}\,\frac{\zeta(11)}{\pi^{10}}
-\frac{106\,483}{8}\,\frac{\zeta(13)}{\pi^{12}}\\
&\quad
+\frac{163\,835}{16}\,\frac{\zeta(15)}{\pi^{14}}
+\sum_{n=4}^{\infty}\binom{2n}{n}\frac{1}{4^n}\,\Phi_1(8+2n).
\end{align*}

\vspace{0.5cm}

\subsection{\hspace{0.3cm}A new perspective on the arithmetic nature of the ratios $\dfrac{\zeta(2n+1)}{\pi^{2n+1}}$ and $\dfrac{\beta(2n)}{\pi^{2n}}$}

\vspace{0.3cm}

For $\Re(s)>1$ we consider
\[
\Phi_1(s)
:=\int_0^1
\frac{x^{s-1}\sqrt{1-x^2}}{\sqrt{1-x^2}\,\operatorname{arctanh}x}\,dx, \Phi_2(s)
:=\int_0^1
\frac{x^{s-1}}{\sqrt{1-x^2}\,\operatorname{arctanh}x}\,dx .
\]

Assume the power–series expansions
\[
\begin{cases}
\displaystyle
\frac{1}{\operatorname{arctanh}x}
=\sum_{n=0}^{\infty} p_n\,x^{2n-1},
\\[0.6cm]
\displaystyle
\frac{1}{\sqrt{1-x^2}\,\operatorname{arctanh}x}
=\sum_{n=0}^{\infty} q_n\,x^{2n-1},
\end{cases}
\qquad 0<|x|<1.
\] with coefficients \(p_n, q_n\) defined implicitly by these identities

\vspace{0.4cm}

Using the representations
\[
\begin{cases}
\displaystyle
\Phi_2(s)
=
\int_0^1
\frac{x^{s-1}}{\sqrt{1-x^2}\,\operatorname{arctanh}x}\,dx,
\\[0.6cm]
\displaystyle
\Phi_1(s)
=
\int_0^1
\frac{x^{s-1}\sqrt{1-x^2}}{\sqrt{1-x^2}\,\operatorname{arctanh}x}\,dx,
\end{cases}
\]
we substitute the corresponding series and interchange summation and integration
(for \(\Re(s)>1\)):

\[
\begin{cases}
\displaystyle
\Phi_2(s)
=
\sum_{n=0}^{\infty} p_n
\int_0^1
\frac{x^{s+2n-2}}{\sqrt{1-x^2}}\,dx,
\\[0.8cm]
\displaystyle
\Phi_1(s)
=
\sum_{n=0}^{\infty} q_n
\int_0^1
x^{s+2n-2}\sqrt{1-x^2}\,dx.
\end{cases}
\]

\vspace{0.4cm}

The integrals are evaluated in terms of Beta functions:
\[
\begin{cases}
\displaystyle
\int_0^1
\frac{x^{s+2n-2}}{\sqrt{1-x^2}}\,dx
=
\frac{\sqrt{\pi}}{2}
\frac{\Gamma\!\left(\frac{s+2n-1}{2}\right)}
{\Gamma\!\left(\frac{s+2n}{2}\right)},
\\[1.0cm]
\displaystyle
\int_0^1
x^{s+2n-2}\sqrt{1-x^2}\,dx
=
\frac{\sqrt{\pi}}{4}
\frac{\Gamma\!\left(\frac{s+2n-1}{2}\right)}
{\Gamma\!\left(\frac{s+2n+2}{2}\right)}.
\end{cases}
\]

\vspace{0.4cm}

Using Legendre’s duplication formula\[
\Gamma(z)\,\Gamma\!\left(z+\tfrac12\right)
=2^{\,1-2z}\sqrt{\pi}\,\Gamma(2z),
\] both expressions reduce to
\[
\boxed{
\begin{cases}
\displaystyle
\Phi_2(s)
=
\pi\,2^{\,1-s}
\sum_{n=0}^{\infty}
p_n\,4^{-n}\,
\frac{\Gamma(s+2n-1)}
{\Gamma\!\left(\frac{s+2n}{2}\right)^2},
\\[1.0cm]
\displaystyle
\Phi_1(s)
=
\pi\,2^{\,1-s}
\sum_{n=0}^{\infty}
q_n\,4^{-n}\,
\frac{\Gamma(s+2n-1)}
{(s+2n)\,\Gamma\!\left(\frac{s+2n}{2}\right)^2},
\end{cases}
\qquad \Re(s)>1.
}
\]

\vspace{0.5cm}

Specializing to even arguments \(s=2m\), \(m=1,2,3,\dots\), we finally obtain
\[
\boxed{
\begin{cases}
\displaystyle
\Phi_2(2m)
=
\pi\,2^{\,1-2m}
\sum_{n=0}^{\infty}
p_n\,4^{-n}\,
\binom{2(m+n-1)}{\,m+n-1\,},
\\[1.0cm]
\displaystyle
\Phi_1(2m)
=
\pi\,2^{\,1-2m}
\sum_{n=0}^{\infty}
q_n\,4^{-n}\,
\frac{1}{2(m+n)}\,
\binom{2(m+n-1)}{\,m+n-1\,},
\end{cases}
\qquad m=1,2,3,\dots
}
\]

We already established that the irrationality of $\dfrac{\beta(2n)}{\pi^{2n}}$ would follow from this conjecture \cite{talla_waffo_integral_2025} \cite{TallaWaffo2025arxiv2511.02843}:
\[
\pi \notin \text{Span}_{\mathbb{Q}}\left\{
\Phi_2(2),\ 
\Phi_2(4),\ 
\Phi_2(6),\ 
\Phi_2(8),\ 
\Phi_2(10),\ 
\Phi_2(12),\ 
\Phi_2(14),\ 
\Phi_2(16),\ 
\Phi_2(18),\ 
\ldots \right\}
\]

\vspace{0.3cm}

In virtue of \[
\Phi_2(2m)
=
\pi\,2^{\,1-2m}
\sum_{n=0}^{\infty}
p_n\,4^{-n}\,
\binom{2(m+n-1)}{\,m+n-1\,},
\qquad m=1,2,3,\dots
\], the irrationality of $\dfrac{\beta(2n)}{\pi^{2n}}$ would follow from this conjecture since $2^{1-2m}$ is always rational: 

\[
1 \notin \text{Span}_{\mathbb{Q}} \left\{ 
\sum_{n=0}^{\infty}
p_n\,4^{-n}\,
\binom{2n}{\,n\,},\ 
\sum_{n=0}^{\infty}
p_n\,4^{-n}\,
\binom{2n+2}{\,n+1\,},\ 
\sum_{n=0}^{\infty}
p_n\,4^{-n}\,
\binom{2n+4}{\,n+2\,},\ 
\sum_{n=0}^{\infty}
p_n\,4^{-n}\,
\binom{2n+6}{\,n+3\,},\  
\ldots \right\}
\] 

\vspace{0.3cm}

Similarly, we already established that the irrationality of 
$\dfrac{\zeta(2n+1)}{\pi^{2n+1}}$ would follow from the conjecture
\cite{talla_waffo_integral_2025,TallaWaffo2025arxiv2511.02843}
\[
\pi \notin \operatorname{Span}_{\mathbb{Q}}\!\left\{
\Phi_1(2),\,
\Phi_1(4),\,
\Phi_1(6),\,
\Phi_1(8),\,
\Phi_1(10),\,
\Phi_1(12),\,
\Phi_1(14),\,
\Phi_1(16),\,
\Phi_1(18),\,
\ldots
\right\}.
\]

Using the representation
\[
\Phi_1(2m)
=
\pi\,2^{\,1-2m}
\sum_{n=0}^{\infty}
q_n\,4^{-n}\,
\frac{1}{2(m+n)}
\binom{2(m+n-1)}{\,m+n-1\,},
\qquad m=1,2,3,\dots,
\]
this conjecture is equivalently reformulated as
\[
1 \notin \operatorname{Span}_{\mathbb{Q}}\!\left\{
\sum_{n=0}^{\infty} q_n\,4^{-n}\frac{1}{n+1}\binom{2n}{n},\,
\sum_{n=0}^{\infty} q_n\,4^{-n}\frac{1}{n+2}\binom{2n+2}{n+1},\,
\sum_{n=0}^{\infty} q_n\,4^{-n}\frac{1}{n+3}\binom{2n+4}{n+2},\,
\ldots
\right\}.
\]

\vspace{0.5cm}

\printbibliography

\end{document}